\documentclass[12pt]{amsart}
\usepackage{amscd}
%
%------    GENERAL MACROS    -----
%
% Standard rings and fields, affine and projective space
%
               % the font for N,Z,Q,R,C

%
%------------------------------------------------
% Symbols in "Fraktur"
%
\def\frk{\frak}               % font for "Fraktur"

\def\Phi{{\frk n}}
\def\Phi{{\frk N}}
%
%------------------------------------------------
% Small letters in bold
%

%
\def\opn#1#2{\def#1{\operatorname{#2}}} % to make operators
%------------------------------------------------
% Numerical invariants of rings, ideals, and modules
%
\opn\chara{char} \opn\length{\ell} \opn\pd{pd} \opn\rk{rk}
\opn\projdim{proj\,dim} \opn\injdim{inj\,dim} \opn\rank{rank}
\opn\depth{depth} \opn\sdepth{sdepth} \opn\fdepth{fdepth}
\opn\grade{grade} \opn\height{height} \opn\embdim{emb\,dim}
\opn\codim{codim}  \opn\min{min} \opn\max{max}

\opn\Tr{Tr} \opn\bigrank{big\,rank}
\opn\superheight{superheight}\opn\lcm{lcm}
\opn\trdeg{tr\,deg}%\emph{
\opn\reg{reg} \opn\lreg{lreg} \opn\ini{in} \opn\lpd{lpd}
\opn\size{size}
%------------------------------------------------
% Divisors
%
\opn\div{div} \opn\Div{Div} \opn\cl{cl} \opn\Cl{Cl}
%
%------------------------------------------------
% Subsets of the spectrum of a ring
%
\opn\Spec{Spec} \opn\Supp{Supp} \opn\supp{supp} \opn\Sing{Sing}
\opn\Ass{Ass} \opn\Min{Min}
%
%------------------------------------------------
% Standard operations on ideals and modules
%
\opn\Ann{Ann} \opn\Rad{Rad} \opn\Soc{Soc}
%
%------------------------------------------------
% Linear algebra and homology, endo- and automorphisms
%
\opn\Im{Im} \opn\Ker{Ker} \opn\Coker{Coker} \opn\Am{Am}
\opn\Hom{Hom} \opn\Tor{Tor} \opn\Ext{Ext} \opn\End{End}
\opn\Aut{Aut} \opn\id{id}  \opn\deg{deg}

\opn\nat{nat}
\opn\pff{pf}%   \pf exists already
\opn\Pf{Pf} \opn\GL{GL} \opn\SL{SL} \opn\mod{mod} \opn\ord{ord}
\opn\Gin{Gin} \opn\Hilb{Hilb}
%
%------------------------------------------------
% Convexity
%
\opn\aff{aff} \opn\con{conv} \opn\relint{relint} \opn\st{st}
\opn\lk{lk} \opn\cn{cn} \opn\core{core} \opn\vol{vol}
\opn\link{link} \opn\star{star}
%------------------------------------------------
% Graded rings and Rees algebras
\opn\gr{gr}

%
%------------------------------------------------
% Polynomials and power series
%

\def\pot#1#2{#1[\kern-0.28ex[#2]\kern-0.28ex]}

%
%------------------------------------------------
% Direct and inverse limits
%
\opn\dirlim{\underrightarrow{\lim}}
\opn\inivlim{\underleftarrow{\lim}}
%
%
% Names with a meaning
%

\let\Dirsum=\bigoplus

%
%------------------------------------------------
%
\let\to=\rightarrow

\def\Implies{\ifmmode\Longrightarrow \else
        \unskip${}\Longrightarrow{}$\ignorespaces\fi}
\def\implies{\ifmmode\Rightarrow \else
        \unskip${}\Rightarrow{}$\ignorespaces\fi}
\def\iff{\ifmmode\Longleftrightarrow \else
        \unskip${}\Longleftrightarrow{}$\ignorespaces\fi}

\let\:=\colon
\newtheorem{Theorem}{Theorem}[section]
\newtheorem{Lemma}[Theorem]{Lemma}
\newtheorem{Corollary}[Theorem]{Corollary}
\newtheorem{Proposition}[Theorem]{Proposition}
\newtheorem{Remark}[Theorem]{Remark}

\newtheorem{Example}[Theorem]{Example}

\newtheorem{Definition}[Theorem]{Definition}

%
% We like the var forms of some greek letters (as taught in German schools)
%
\let\epsilon\varepsilon
\let\phi=\varphi
\let\kappa=\varkappa
%
%           We print on A4 paper
%
\textwidth=15cm \textheight=22cm \topmargin=0.5cm
\oddsidemargin=0.5cm \evensidemargin=0.5cm \pagestyle{plain}
%
%           The pf environment of AMSART needs a little help
%
\def\qed{\ifhmode\textqed\fi
      \ifmmode\ifinner\quad\qedsymbol\else\dispqed\fi\fi}
\def\textqed{\unskip\nobreak\penalty50
       \hskip2em\hbox{}\nobreak\hfil\qedsymbol
       \parfillskip=0pt \finalhyphendemerits=0}
\def\dispqed{\rlap{\qquad\qedsymbol}}

%
% ------    END OF GENERAL MACROS    -------
\opn\dis{dis}
\def\pnt{{\raise0.5mm\hbox{\large\bf.}}}

\opn\Lex{Lex}

%-- macro for local cohomology-----------------------------

%-- macro for a complicated condition for the extended
%-- Hochster's formula

\begin{document}

\title{\bf The Stanley conjecture on intersections of four monomial prime ideals}

\author{ Dorin Popescu }

\thanks{The  support from  the CNCSIS grant PN II-542/2009 of Romanian Ministry of Education, Research and Inovation is gratefully acknowledged.}

\address{Dorin Popescu, Simion Stoilow Institute of Mathematics, Research unit 5,
University of Bucharest, P.O.Box 1-764, Bucharest 014700, Romania}
\email{dorin.popescu@imar.ro}

\maketitle
\begin{abstract} We show that the Stanley's Conjecture holds for  an intersection of four monomial prime ideals of a
polynomial algebra $S$ over a field and for an arbitrary intersection of monomial prime ideals
$(P_i)_{i\in [s]}$ of $S$ such that each $P_i$ is not contained in the sum of the other $(P_j)_{j\not = i}$.

  \vskip 0.4 true cm
 \noindent
  {\it Key words } : Monomial Ideals,  Stanley decompositions, Stanley depth.\\
 {\it 2000 Mathematics Subject Classification: Primary 13C15, Secondary 13F20, 13F55,
13P10.}
\end{abstract}

\section*{Introduction}

 Let  $S=K[x_1,\ldots,x_n]$, $n\in {\bf N}$, be a polynomial ring over a field $K$. Let $I\subset S$ be a monomial ideal of $S$ and  $u\in I$ a monomial in $I$.
  For $Z\subset \{x_1,\ldots ,x_n\}$ let $uK[Z]$ be the linear $K$-subspace of $I$ generated by the elements $uf$, $f\in K[Z]$.  A  presentation of $I$ as a finite direct sum of such spaces ${\mathcal D}:\
I=\Dirsum_{i=1}^ru_iK[Z_i]$ is called a Stanley decomposition of $I$. Set $\sdepth
(\mathcal{D}):=\min\{|Z_i|:i=1,\ldots,r\}$ and
\[
\sdepth\ I :=\max\{\sdepth \ ({\mathcal D}):\; {\mathcal D}\; \text{is a
Stanley decomposition of}\;  I \}.
\]

 Stanley's Conjecture \cite{S} says that $\sdepth\ I\geq \depth\ I$. This would be a nice connection between a combinatorial invariant and a homological
one. The Stanley's Conjecture holds for arbitrary squarefree monomial ideals if $n\leq 5$
by \cite{P} (see especially the arXiv version), and for intersections of three monomial
prime ideals by \cite{AP}. In the non squarefree monomial ideals a useful inequality is $\sdepth I\leq \sdepth \sqrt{I}$ (see \cite[Theorem 2.1]{Is}). In this paper we study only the case of squarefree monomial ideals.

We will extend the so called "special Stanley decompositions" of \cite{AP} (see Theorem \ref{sdepth}). This tool is very important because it gives lower bounds of $\sdepth_S I$ in terms of $\sdepth$ of some ideals in less variables for which we may apply mathematical induction. More precisely, we use it to find ``good" lower bounds of $\sdepth(I)$.

Let  $I=\cap_{i=1}^s P_i$  be a reduced intersection of monomial prime ideals of $S$ such that $P_i\not \subset \sum_{1=j\not =i}^s P_j$ for all $i\in [s]$. Then $$\sdepth_S I\geq \depth_S I=s+\dim S/\sum_{i=1}^sP_i,$$
as shows our Theorems \ref{-s} and \ref{s}. On the other hand, we show that if  $I$ is an
intersection of four monomial prime ideals then again Stanley's Conjecture holds (see Theorem \ref{main}).

 We introduce the so called the big size $t(I)$ of $I$ (usually bigger than the size of $I$ given in \cite{L}) and use it to find depth formulas. If $t(I)=1$ then $\depth I=2$ and the Stanley's Conjecture holds (see Corollary \ref{t1}). If $t(I)=2$ then we describe the possible values of $\depth I$ (see Lemmas \ref{good}, \ref{bad}) although we cannot show always that the Stanley's Conjecture holds. The obstruction is hinted by Example \ref{hin} and  Remark \ref{r1}.

\vskip 0.5 cm

\section{Big size one}
Let  $I=\cap_{i=1}^s P_i$, $s\geq 2$  be an irredundant intersection of monomial prime ideals of $S$.   We assume that $\sum_{i=1}^s P_i=m=(x_1,\ldots,x_n)$.
\begin{Definition} {\em Let $e$ be the minimal number such that there exists $e$ prime ideals among $(P_i)$ whose sum is $m$. After \cite{L}  the  {\em size} of $I$ is $e-1$. We call the {\em big size} of $I$ the minimal number $t=t(I)<s$  such that the sum of all possible $(t+1)$ prime ideals of $\{P_1,\ldots,P_s\}$
is $m$. We  set $t(m)=0$. Clearly the big size of $I$ is bigger or equal than the size of $I$.
If $a=\sum_{i=1}^s P_i\not=m$ then let $v$ be the minimal number $t<s$  such that the sum of all possible $(t+1)$ prime ideals of $\{P_1,\ldots,P_s\}$
is $a$. We call $v+\dim S/a$ the {\em big size} of $I$.}
\end{Definition}

We  need in our proofs  the following elementary lemma.
\begin{Lemma} \label{e} Let $J$, $E$, $F$ be some monomial ideals of $S$. Then there exists a canonical exact sequence
$$0\to S/(J\cap E\cap F)\to S/(J\cap E)\oplus S/(J\cap F)\to S/(J\cap (E+F))\to 0.$$
\end{Lemma}
\begin{proof}
Since the ideals are monomial we have $J\cap (E+F)=(J\cap E)+(J\cap F)$. The above exact sequence follows now from the well known exact sequence
$$0\to S/(E'\cap F')\to S/ E'\oplus S/ F'\to S/(E'+F')\to 0.$$
\end{proof}
\begin{Lemma}\label{1}  Suppose that there exists $1\leq c<s$ such that $P_i+P_j=m$ for each $c<j\leq s$ and $1\leq i\leq c$.
 Then $\depth_S I=2$. In particular, if the big size of $I$ is $1$ then $\depth_S I=2$.
\end{Lemma}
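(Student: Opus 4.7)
The plan is to compute $\depth S/I$ combinatorially and then conclude $\depth I=\depth S/I+1=2$ via the depth lemma applied to $0\to I\to S\to S/I\to 0$.

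I would first translate the hypothesis into combinatorics. Writing $P_i=(x_k:k\notin F_i)$ with $F_i\subseteq[n]$, the condition $P_i+P_j=m$ reads $F_i\cap F_j=\emptyset$. Setting $A=\bigcup_{i\le c}F_i$ and $B=\bigcup_{j>c}F_j$, the hypothesis forces $A\cap B=\emptyset$, so the simplicial complex $\Delta$ with facets $F_1,\ldots,F_s$ (for which $S/I=K[\Delta]$) decomposes as the vertex-disjoint union $\Delta=\Delta_A\sqcup\Delta_B$, where $\Delta_A$ (resp.\ $\Delta_B$) carries the facets $F_i$ with $i\le c$ (resp.\ $i>c$). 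Reducedness of the decomposition with $s\ge 2$ forces each $F_i$ to be nonempty, so both $\Delta_A$ and $\Delta_B$ are nonempty.

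I would then read off $\depth S/I=1$ from the fiber-product description $K[\Delta]=K[\Delta_A]\times_K K[\Delta_B]$, which fits into a short exact sequence of $S$-modules
\[
0\to K[\Delta]\to K[\Delta_A]\oplus K[\Delta_B]\to K\to 0,
\]
the surjection sending $(f,g)$ to $f(0)-g(0)$. Each of $K[\Delta_A]$ and $K[\Delta_B]$ is the Stanley--Reisner ring of a nonempty complex with no empty facet, so neither has $m$ associated; together with a standard change-of-rings argument (these modules are annihilated by the complementary variables) this gives $\depth_S K[\Delta_A],\depth_S K[\Delta_B]\ge 1$. The depth lemma applied to the displayed sequence, using $\depth_S K=0$, then pins $\depth S/I=\depth K[\Delta]=1$, and a second application of the depth lemma to $0\to I\to S\to S/I\to 0$, using $\depth S=n\ge 2$, delivers $\depth I=2$.

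For the "in particular" statement, if $t(I)=1$ then by definition some $P_{i_1}\ne m$ satisfies $P_{i_1}+P_j=m$ for every $j\in[s]\setminus\{i_1\}$; relabelling $P_1:=P_{i_1}$ and taking $c=1$ puts us in the hypothesis of the first part. The main subtlety is really bookkeeping for variables $x_k$ with $k\notin A\cup B$: such $x_k$ lies in every $P_i$ and hence in $I$, so it annihilates $K[\Delta]$, but the depth of a module killed by a set of variables agrees with its depth over the smaller polynomial ring, so the fiber-product computation carries through unchanged and the conclusion $\depth I=2$ is unaffected.
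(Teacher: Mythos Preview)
Your proof is correct and is essentially the paper's argument dressed in Stanley--Reisner language: your fiber-product sequence $0\to K[\Delta]\to K[\Delta_A]\oplus K[\Delta_B]\to K\to 0$ is exactly the paper's Mayer--Vietoris sequence $0\to S/I\to S/(\bigcap_{i\le c}P_i)\oplus S/(\bigcap_{j>c}P_j)\to S/m\to 0$, and both conclude $\depth S/I=1$ via the Depth Lemma. Your treatment is more detailed (you make explicit why the middle term has positive depth and handle the stray variables outside $A\cup B$), but the underlying idea is identical.
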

\begin{proof}
Using the following exact sequence (apply the above lemma for the case $J=S$, $E=\cap_{i=1}^{c} P_i$, $F=\cap_{j>c}^{s} P_j$)
$$0\to S/I\to S/\cap_{i=1}^{c} P_i\oplus S/\cap_{j>c}^{s} P_j\to S/\cap_{i=1}^{c} \cap_{j>c}^s(P_i+P_j)=S/m\to 0$$
we get $\depth S/I=1$ by Depth Lemma \cite[1.3.9]{Vi}, because $(\cap_{i=1}^{c} P_i)+( \cap_{j>c}^{s} P_j)=\cap_{i=1}^{c} \cap_{j>c}^s(P_i+P_j)=m$ by distributivity, the ideals being monomials.
\end{proof}
\begin{Remark}\label{app} {\em By \cite[Proposition 2]{L} $\depth_SS/I$ is always greater or equal than the size of $I$. So if the  size of $I$ is $1$, then necessarily $\depth_SI\geq 2$.  The equality follows when  the big size of $I$ is $1$. It is well known that $\depth_S S/I$ is less than or equal to $\dim S/P$, where $P$ is one of the assocated primes of $I$ ( see \cite[Proposition 1.2.13]{BH}).}
\end{Remark}
\begin{Example}{\em Let $n=5$, $s=4$, $P_1=(x_1,x_5)$, $P_2=(x_2,x_5)$, $P_3=(x_3,x_5)$, $P_4=(x_1,x_2,x_3,x_4)$. Since $P_1+P_2+P_3\not =m$ the big size of $I=\cap_{i=1}^4 P_i$ is $3$. The above lemma gives $\depth_SS/I=1$ because $P_i+P_4=m$ for all $1\leq i\leq 3$. Note that here the size of $I$  is $1$. In fact the above lemma gives examples when $\depth_SS/I=1$ and $t(I)\geq c$ for all positive integer $c$.}
\end{Example}
Next we  extend \cite[Proposition 2.3]{AP}. Let $r<n$ be a positive integer and $S'=K[x_{r+1},\ldots,x_n]$, $S''=K[x_{1},\ldots,x_r]$. We suppose that one prime ideal $P_i$ is generated by  some of the first $r$ variables. If $P_i=(x_1,\ldots,x_r)$ we say that $P_i$ is a {\em main prime}. For a subset $\tau\subset [s]$ we set
$$S_{\tau}=K[\{x_i: 1\leq i\leq r, x_i\not\in \Sigma_{j\in \tau} P_j \}]$$
 and let
${\mathcal F}$ be the set of all nonempty subsets  $\tau\subset [s]$ such that
$$L_{\tau}=(\cap_{i\in \tau} P_i)\cap S'\not =(0), \ \ J_{\tau}=(\cap_{i\in [s]\setminus \tau}\ P_i)\cap S_{\tau}\not =(0).$$
  For a $\tau\in {\mathcal F}$ consider the ideals
 $I_0=(I\cap K[x_1,\ldots,x_r])S$,  and
 $$I_{\tau}=J_{\tau}S_{\tau}[x_{r+1},\ldots,x_n]\cap L_{\tau}S_{\tau}[x_{r+1},\ldots,x_n]\subset S_{\tau}[x_{r+1},\ldots,x_n].$$
   Write $A_{\tau}=\sdepth_{S_{\tau}[x_{r+1},\ldots,x_n]}I_{\tau}$ which is at least $ \sdepth_{S_{\tau}}J_{\tau}+\sdepth_{S'}L_{\tau}$ by \cite[Theorem 4.1]{PQ}, \cite[Lemma 1.2]{AP}.   We also take $A_0=\sdepth_SI_0$ if $I_0\not=(0)$, otherwise take $A_0=n$.
\begin{Theorem}\label{sdepth} In the above setting
$\sdepth_SI\geq \min (\{A_0\}\cup \{A_{\tau}\}_{\tau\in {\mathcal F}}\}).$
\end{Theorem}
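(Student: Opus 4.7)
The plan is to construct a Stanley decomposition of $I$ by assigning to each monomial $u\in I$ a combinatorial type $\tau(u)\subseteq[s]$ and then matching each type with either $I_0$ or one of the $I_\tau$. Write $u=u_1u_2$ uniquely with $u_1\in S''=K[x_1,\ldots,x_r]$ and $u_2\in S'$, and set
$$\tau(u)=\{j\in[s]:u_1\notin P_j\}.$$
The central claim is that, partitioning the monomials of $I$ by $\tau(u)$, the class $\tau(u)=\emptyset$ coincides with the monomials of $I_0$, while for each $\tau\neq\emptyset$ that actually occurs one has $\tau\in\mathcal F$ and the class $\{u\in I:\tau(u)=\tau\}$ coincides with the monomials of $I_\tau$, viewed inside $S_\tau[x_{r+1},\ldots,x_n]$.

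I would establish the partition in two directions. First, $\tau(u)=\emptyset$ iff $u_1\in\bigcap_j P_j\cap S''=I\cap S''$, iff $u\in(I\cap S'')S=I_0$. Now fix $u\in I$ with $\tau(u)=\tau\neq\emptyset$. For $j\notin\tau$ we have $u_1\in P_j$; for $j\in\tau$, $u\in P_j$ together with $u_1\notin P_j$ forces $u_2\in P_j$, giving $u_2\in L_\tau$ and so $L_\tau\neq 0$. A variable $x_k$ of $u_1$ with $k\leq r$ lying in some $P_{j_0}$ with $j_0\in\tau$ would yield $u_1\in P_{j_0}$, contradicting $j_0\in\tau(u)$; hence $u_1\in S_\tau$, so $u_1\in J_\tau$, and $J_\tau\neq 0$ (for $\tau=[s]$ this is automatic since $J_{[s]}=K$). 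Thus $\tau\in\mathcal F$ and $u$ is a monomial of $I_\tau$. Conversely, any monomial $v\in I_\tau$ factors as $v_1v_2$ with $v_1\in J_\tau$ and $v_2\in L_\tau$, whence $v\in\bigcap_j P_j=I$; the containment $v_1\in S_\tau$ automatically gives $v_1\notin P_j$ for $j\in\tau$, so $\tau(v)=\tau$.

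With the partition established, take Stanley decompositions $\mathcal D_0$ of $I_0$ in $S$ with $\sdepth\mathcal D_0\geq A_0$ and, for each $\tau\in\mathcal F$, a Stanley decomposition $\mathcal D_\tau$ of $I_\tau$ in $S_\tau[x_{r+1},\ldots,x_n]$ with $\sdepth\mathcal D_\tau\geq A_\tau$. Every Stanley piece $vK[Z]$ of $\mathcal D_\tau$ uses only variables from $S_\tau[x_{r+1},\ldots,x_n]$, hence the $S''$-part of any monomial $vw$ in $vK[Z]$ uses only variables outside $\sum_{j\in\tau}P_j$; by the same bookkeeping, $\tau(vw)=\tau$ for every such $vw$. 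The pieces from $\mathcal D_0$ and from the various $\mathcal D_\tau$ are therefore pairwise monomial-disjoint and jointly exhaust the monomials of $I$, yielding a Stanley decomposition of $I$ in $S$ of sdepth at least $\min\{A_0,\{A_\tau\}_{\tau\in\mathcal F}\}$.

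The main obstacle is the claim that the invariant $\tau(u)$ places $u$ into the correct algebraic piece $I_\tau$; this hinges on the non-trivial observation that $u_1\in S_\tau$ automatically whenever $\tau(u)=\tau$, which is precisely what welds the combinatorial partition to the algebraic Stanley pieces of the $I_\tau$.
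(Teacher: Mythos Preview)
Your proof is correct and follows essentially the same approach as the paper: both define the type $\tau(u)=\{j:u_1\notin P_j\}$ for the $S''$-factor $u_1$, show that this partitions the monomials of $I$ among $I_0$ and the $I_\tau$ with $\tau\in\mathcal F$, and then assemble Stanley decompositions of the pieces. Your write-up is in fact more explicit than the paper's, particularly in justifying $u_1\in S_\tau$ and in verifying the converse direction that every monomial of $I_\tau$ has type exactly $\tau$, which the paper only asserts in passing.
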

\begin{proof} (after \cite{AP}) First we show that
$$I=I_0\oplus (\oplus_{\tau\in {\mathcal F}}I_{\tau}),$$
where the direct sum is of linear $K$-spaces. Let $a\in I\setminus I_0$ be a monomial. We have $a=uv$, where $u\in S''$ and $v\in S'$. Set
$\rho =\{i\in [s]:u\not \in P_i\}$. Clearly, $\rho\not=\emptyset$ because $a\not \in I_0$. As $a\in I\subset P_i$,  we get $v\in P_i$ for all  $i\in \rho$, and $v\in L_{\rho}$. On the other hand, by definition of $\rho$ we have $u\in J_{\rho}$. Hence $\rho\in {\mathcal F}$ and $a\in I_{\rho}$. The sum is direct because for any $a\in I\setminus I_0$ there exists just one $\rho=\{i\in [s]:u\not \in P_i\}\in {\mathcal F}$ such that $a\in I_{\rho}$. Note that the monomials of $I\setminus I_0$ are disjoint union of monomials of $I_{\tau}$, $\tau\in {\mathcal F}$.

Now choose ``good" Stanley decompositions ${\mathcal D}_0$, ${\mathcal D}_{\tau}$ for $I_0$, respectively $I_{\tau}$ such that
$\sdepth_S{\mathcal D}_0=\sdepth_SI_0$, $\sdepth_{S_{\tau}[x_{r+1},\ldots, x_n]}{\mathcal D}_{\tau}=\sdepth_{S_{\tau}[x_{r+1},\ldots, x_n]}I_{\tau}.$
They will induce a Stanley decomposition ${\mathcal D}$ of $I$ such that

$\sdepth_SI\geq \sdepth_S{\mathcal D}=\min ( \{\sdepth_SI_0\}\cup\{\sdepth_{S_{\tau}[x_{r+1},\ldots, x_n]}I_{\tau}\}_{\tau\in {\mathcal F}}\}).$\end{proof}

\begin{Corollary}\label{t1} If the big size of $I$ is $1$ then  $\sdepth_S I\geq 2$, that is Stanley's Conjecture holds for $I$.
\end{Corollary}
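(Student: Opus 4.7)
The plan is to combine Lemma~\ref{1} with Theorem~\ref{sdepth}. Lemma~\ref{1} already forces $\depth_S I=2$ under the big size one hypothesis, so the content of the corollary is the matching lower bound $\sdepth_S I\geq 2$.

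After renumbering the primes I may assume $P_s$ is the distinguished prime with $P_s\not=m$ and $P_s+P_j=m$ for all $j<s$, which is exactly what big size one amounts to (take $i_1=s$ in the definition). Since $P_s$ is a monomial prime, after renumbering variables I can also arrange $P_s=(x_1,\ldots,x_r)$ with $1\leq r<n$, so that $P_s$ becomes a main prime in the sense of Theorem~\ref{sdepth}. The relations $P_s+P_j=m$ then force $\{x_{r+1},\ldots,x_n\}\subset P_j$ for every $j<s$, but I will not need to use this structure explicitly in the sdepth estimate.

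Now I apply Theorem~\ref{sdepth} with this choice of $r$ and this $P_s$. The crucial observation is that if $s\in\tau$, then
\[
L_\tau=(\cap_{i\in\tau}P_i)\cap S'\subset P_s\cap S'=0,
\]
so such $\tau$ are not in ${\mathcal F}$. Hence every $\tau\in{\mathcal F}$ satisfies $s\notin\tau$, and in particular $P_s$ participates in the intersection defining $J_\tau$. By definition of ${\mathcal F}$, both $J_\tau$ and $L_\tau$ are nonzero monomial ideals; $L_\tau$ lives in $S'=K[x_{r+1},\ldots,x_n]$, which has $n-r\geq 1$ variables, and $J_\tau\not=0$ forces $S_\tau$ to contain at least one variable. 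Since a nonzero monomial ideal in a polynomial ring in at least one variable has Stanley depth $\geq 1$, I get $A_\tau\geq\sdepth J_\tau+\sdepth L_\tau\geq 1+1=2$.

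For $A_0$, either $I_0=(0)$ and $A_0=n\geq 2$ by convention, or $I_0=(I\cap S'')S$ for a nonzero monomial ideal $I\cap S''$ of $S''=K[x_1,\ldots,x_r]$, and then the standard additivity of Stanley depth under adjunction of free variables gives
\[
\sdepth_S I_0=\sdepth_{S''}(I\cap S'')+(n-r)\geq 1+1=2.
\]
Combining these bounds in Theorem~\ref{sdepth} yields
\[
\sdepth_S I\geq\min\{A_0,\{A_\tau\}_{\tau\in{\mathcal F}}\}\geq 2=\depth_S I,
\]
which is the Stanley conjecture for $I$. The only conceptual step is arranging the distinguished big-size-one prime to serve as the main prime of Theorem~\ref{sdepth}; once this is done, every inequality follows from the elementary facts that nonzero monomial ideals in at least one variable have Stanley depth at least $1$ and that Stanley depth is additive under extension by free variables.
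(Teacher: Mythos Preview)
Your proof is correct. Both you and the paper first invoke Lemma~\ref{1} to get $\depth_S I=2$, so the content is the inequality $\sdepth_S I\geq 2$. The paper gives two arguments for this: the primary one simply quotes the general bound $\sdepth_S I\geq 2$ for monomial ideals in $n\geq 3$ variables from \cite{FH}, handling $n\leq 2$ by hand; the alternative, given ``for the sake of completeness'', applies Theorem~\ref{sdepth} with $P_1$ as main prime, uses the big-size-one hypothesis $P_i+P_j=m$ for all $i\neq j$ to pin down $\mathcal F$ as the collection of singletons $\{i\}$ with $i>1$, and then bounds each $A_{\tau_i}$ by an induction on $s$ that converts $\sdepth$ into $\depth$.

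Your route through Theorem~\ref{sdepth} is a genuine simplification of this second argument: you neither identify $\mathcal F$ explicitly nor induct on $s$. Instead you note that for \emph{every} $\tau\in\mathcal F$ the trivial bound $\sdepth\geq 1$ on the nonzero ideals $J_\tau$ and $L_\tau$ already yields $A_\tau\geq 2$, and likewise $A_0\geq 2$. In particular, your sdepth estimate never uses that the big size is one; that hypothesis enters only on the depth side via Lemma~\ref{1}. This makes the proof shorter than the paper's inductive version and self-contained in that it avoids the citation to \cite{FH}. One cosmetic remark: big size one actually means that \emph{every} pair satisfies $P_i+P_j=m$, not merely the pairs containing your distinguished $P_s$, so your phrase ``which is exactly what big size one amounts to'' understates the hypothesis; but since your sdepth bound uses none of this pairwise information, the argument is unaffected.
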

\begin{proof} It is easy to see that the corollary holds for $n\leq 2$. If $n\geq 3$ then $\sdepth_SI\geq 2=\depth I$ by \cite[Theorem 3.4]{FH}, which is enough as shows our Lemma \ref{1}. For the sake of the completeness we give below another proof applying the above proposition.

Use induction on $s\geq 1$, the case $s=1$ being easy. We may assume that $P_1=(x_1,\ldots,x_r)$ for some $r<n$.
By Theorem \ref{sdepth} we have $$\sdepth_SI\geq \min (\{A_0\}\cup\{A_{\tau_i}\}_{\tau_i\in  {\mathcal F}}\}),$$
where $\tau_i=\{i\}$ for some $1<i\leq s$. Indeed, we have ${\mathcal F}\subset\{\tau_i\}_{1<i\leq s}$ because $P_j+P_i=m$ for all $j\not =i$. The inclusion is in fact an equality. Indeed, if
$$P_j\cap K[\{x_e: 1\leq e\leq r, x_e\not \in P_i\}]=(0)$$
 for some $1<j\not =i$ then $P_j\cap S''\subset P_i$ and so $P_1\subset P_j$ since $P_j+P_i=m$ (contradiction).
 If $I_0\not =(0)$ then
$$A_0= \sdepth_{ S''} (I\cap S'')+n-r\geq 1+\dim S/P_1\geq 1+\depth S/I=\depth I.$$
On the other hand, we have
$$A_{\tau_i}\geq\sdepth_{S_{\tau_i}}(\cap_{j\not =i}  P_j\cap S_{\tau_i})+\sdepth_{S'}(P_i\cap S')\geq $$
$$ \depth_{S_{\tau_i}}(\cap_{j\not =i}  P_j\cap S_{\tau_i})+\depth_{S'}((x_{r+1},\ldots ,x_n)S')\geq 2$$
by induction hypothesis and
because $P_j+P_i=m$ for all $j\not =i$. As $\depth I=2$ by Lemma \ref{1} we are done.
\end{proof}
\vskip 0.5 cm

\section{Some results of general big size}
 Let $I=\cap_{i=1}^s P_i$, $s\geq 2$  be an irredundand  intersection of monomial prime ideals of $S$.
\begin{Lemma}\label{ex} If $P_1\not \subset \sum_{i=2}^{s}P_i$ then
$$\depth I=\min (\depth (\cap_{i=2}^{s}P_i),1+\depth (\cap_{i=2}^{s}(P_i+P_1))).$$
\end{Lemma}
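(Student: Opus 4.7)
The plan is to apply the Mayer--Vietoris short exact sequence together with the Depth Lemma, following the strategy of Lemma \ref{1}. Writing $J = \cap_{i=1}^{s-1} P_i$, I first record the monomial identity $J + P_s = \cap_{i=1}^{s-1}(P_i + P_s)$: a monomial in the intersection on the right is, for each $i$, either in $P_i$ or in $P_s$; either it lies in $P_s \subset J + P_s$, or else it lies in every $P_i$ and hence in $J \subset J + P_s$.

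This identity yields the Mayer--Vietoris sequence
\[
0 \to S/I \to S/J \oplus S/P_s \to S/(J + P_s) \to 0.
\]
The crucial auxiliary estimate is $\depth S/P_s \geq \depth S/(J + P_s) + 1$. Indeed, the reducedness of $I = \cap_{i=1}^s P_i$ forces $P_i \not\subset P_s$ for each $i < s$, so $J \not\subset P_s$ and $P_s \subsetneq J + P_s$; since $S/P_s$ is a polynomial ring we have $\depth S/P_s = \dim S/P_s$, and the minimal primes of $J + P_s$ (which are among the $P_i + P_s$) each strictly contain $P_s$, so $\dim S/(J + P_s) < \dim S/P_s$.

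The Depth Lemma applied to the Mayer--Vietoris sequence, with the $\depth S/P_s$ term absorbed by the auxiliary estimate, gives $\depth S/I \geq \min\{\depth S/J,\, \depth S/(J+P_s)+1\}$. For the matching upper bound, the third Depth Lemma inequality $\depth S/(J+P_s) \geq \min\{\depth S/I - 1,\, \depth S/J,\, \depth S/P_s\}$ combined with the same estimate forces $\depth S/I \leq \depth S/(J+P_s)+1$ whenever $\depth S/J > \depth S/(J+P_s)$. In the remaining regime I would invoke the auxiliary sequence $0 \to S/J \to S/I \to S/(I, x) \to 0$ obtained by multiplication by a variable $x \in P_s$ not contained in any $P_i$ for $i < s$; such an $x$ exists by hypothesis and is a nonzerodivisor on $S/J$ (since the associated primes of $S/J$ are the $P_i$'s for $i < s$), which lets one conclude $\depth S/I \leq \depth S/J$. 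Adding $1$ to every depth converts the statement about $S/I$ into the claimed formula for $\depth I$.

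I expect the main obstacle to be pinning down equality (rather than just inequality) in the borderline regime $\depth S/J \approx \depth S/(J+P_s)+1$: the Depth Lemma outputs one-sided bounds, and the secondary short exact sequence together with the specific nonzerodivisor $x$ are what bridge the gap between the two minima.
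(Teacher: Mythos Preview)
Your approach is essentially the paper's: the same Mayer--Vietoris sequence, the same Depth Lemma, and the same colon $I:x=J$ with $x\in P_s\setminus\sum_{i<s}P_i$. The paper is simply more economical: it records $\depth S/I\le\depth S/P_s$ and $\depth S/I\le\depth S/(I:x)=\depth S/J$ directly from \cite[Corollary~1.3]{R}, after which the Depth Lemma gives the equality in one stroke without your case split. Your dimension argument for $\depth S/P_s>\depth S/(J+P_s)$ is a pleasant alternative to one of those two invocations of Rauf.

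There is, however, one genuine gap. You assert that the auxiliary exact sequence $0\to S/J\to S/I\to S/(I,x)\to 0$ ``lets one conclude $\depth S/I\le\depth S/J$'', but the Depth Lemma applied to this sequence does \emph{not} give that inequality: all three Depth Lemma bounds are compatible with, for instance, $\depth S/J=2$, $\depth S/(I,x)=1$, $\depth S/I=3$. The inequality $\depth S/(I:x)\ge\depth S/I$ is precisely Rauf's \cite[Corollary~1.3]{R}, and its proof needs more than this short exact sequence (an induction, or a local-cohomology argument). You should either cite Rauf at this point---as the paper does---or supply an independent proof; the sequence alone will not close the argument.
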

\begin{proof}
By Lemma \ref{e} we have the following exact sequence
$$0\to S/I\to S/(\cap_{i=2}^{s}P_i)\oplus S/P_1\to S/(\cap_{i=2}^{s}(P_i+P_1))\to 0$$
where  $\depth S/I\leq \depth S/P_1$ by \ref{app}. Choosing a variable  $x_i\in P_1\setminus \Sigma_{i=2}^{s}P_i$ we see that $I:x_i=
\cap_{i=2}^{s}P_i$. So
$$\depth S/I\leq \depth S/(I:x_i)=\depth S/(\cap_{i=2}^{s}P_i)$$
by \cite[Corollary 1.3]{R}. It follows that
$$\depth S/I=\min (\depth S/(\cap_{i=2}^{s}P_i),1+\depth S/(\cap_{i=2}^{s}(P_i+P_1)))$$
from Depth Lemma (see \cite[Lemma 1.3.9]{Vi}), because \\
$\depth_SS/P_1\geq 1+\depth_SS/(\cap_{i=2}^{s}(P_i+P_1)).$
\end{proof}

The next theorem uses an easy lemma of Ishaq \cite[Lemma 3.1]{Is1}.
\begin{Lemma}(Ishaq) \label{is}
Let $J\subset S[y]$ be a monomial ideal, $y$ being a new variable. Then $\sdepth_S(J\cap S)\geq \sdepth_{S[y]} J-1$.
\end{Lemma}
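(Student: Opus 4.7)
The plan is to start from a Stanley decomposition of $J$ in $S[y]$ realizing $\sdepth_{S[y]}J$, and to extract from it a Stanley decomposition of $J\cap S$ in $S$ whose Stanley depth drops by at most one. First, I would fix a decomposition $\mathcal{D}\colon J=\bigoplus_{i=1}^r u_iK[Z_i]$ with $\sdepth\mathcal{D}=\sdepth_{S[y]}J=:d$, where each $Z_i\subset\{x_1,\ldots,x_n,y\}$ and $|Z_i|\geq d$.

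Next, for each summand $u_iK[Z_i]$ I would isolate its $y$-free part, i.e.\ the intersection with $S$. Three cases arise. If $y$ divides $u_i$, then no element of $u_iK[Z_i]$ is $y$-free and the summand is discarded. If $y\nmid u_i$ and $y\notin Z_i$, then the whole summand already lies in $S$ and is kept with $|Z_i|\geq d$ variables. If $y\nmid u_i$ and $y\in Z_i$, then the $y$-free part is exactly $u_iK[Z_i\setminus\{y\}]$, a Stanley space in $S$ with $|Z_i|-1\geq d-1$ variables.

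Then I would verify that assembling the surviving pieces gives a Stanley decomposition of $J\cap S$ as a direct sum of $K$-subspaces. Any monomial $w\in J\cap S$ is $y$-free and lies in $J$, hence by directness of $\mathcal{D}$ it belongs to a unique $u_iK[Z_i]$, and by the case analysis it lies in the associated surviving $y$-free piece. Conversely every surviving piece is contained in $J\cap S$. This produces a Stanley decomposition of $J\cap S$ whose depth is at least $d-1$, giving the desired inequality $\sdepth_S(J\cap S)\geq\sdepth_{S[y]}J-1$.

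I do not expect any serious obstacle: the argument is bookkeeping. The only point requiring a touch of care is the direct-sum property of the restricted decomposition, but this follows at once from directness of $\mathcal{D}$ together with the uniqueness of expression of each monomial in a Stanley decomposition.
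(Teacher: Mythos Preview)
Your argument is correct. Note, however, that the paper does not give its own proof of this lemma: it is simply quoted from Ishaq \cite[Lemma 3.1]{Is1}, so there is nothing in the paper to compare against. Your proof is the natural one and is presumably what Ishaq does as well---restrict a Stanley decomposition of $J$ to its $y$-free monomials, losing at most one variable from each surviving Stanley space.
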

The following theorem extends \cite[Theorem 1.4]{AP}.
\begin{Theorem} \label{-s}  Let  $I=\cap_{i=1}^s P_i$  be a reduced intersection of monomial prime ideals of $S$. Assume that $P_i\not \subset \sum_{1=j\not =i}^s P_j$ for all $i\in [s]$. Then $$\depth_S I=s+\dim S/\sum_{i=1}^sP_i.$$
\end{Theorem}
\begin{proof} By \cite[Lemma 3.6]{HVZ} it is enough to consider the case when $\sum_{j=1}^s P_j=m$. Apply induction on $s$. If $s=1$  the result is trivial because $\depth_Sm=1$. Suppose that $s>1$. We may assume that
$P_1=(x_{1},\ldots,x_r)$ for some $r<n$ and set $S''=K[x_1,\ldots,x_r]$, $S'=K[x_{r+1},\ldots,x_n]$.
By Lemma \ref{ex} we get
$$\depth_SI=\min (\depth_S (\cap_{i>1}^{s}P_i),1+\depth_S (\cap_{i>1}^{s}(P_i+P_1))).$$
Note that $P_i\not \subset \Sigma_{1<j\not =i}^{s} P_j$ for all $1<i\leq s$ because, otherwise, we contradict the hypothesis.   Then  the induction hypothesis gives
$$\depth_{S}(\cap_{j>1}^{s} P_j)= s-1+     \dim S/(\Sigma_{i>1}^s P_i)\geq s.$$
As $\cap_{i>1}^{s}(P_i+P_1)$ satisfies also our assumption,  the induction hypothesis gives

\noindent $\depth_S(\cap_{i>1}^{s}(P_i+P_1))=s-1$. Hence $\depth_S I=s$.
\end{proof}
\begin{Theorem} \label{s}  Let  $I=\cap_{i=1}^s P_i$  be a reduced intersection of monomial prime ideals of $S$. Assume that $P_i\not \subset \sum_{1=j\not =i}^s P_j$ for all $i\in [s]$. Then $$\sdepth_S I\geq \depth_S I,$$
that is  Stanley's Conjecture holds for $I$.
\end{Theorem}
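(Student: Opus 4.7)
The plan is to prove both $\depth_SI=s$ and $\sdepth_SI\geq s$ by a single induction on $s$. For each $i\in[s]$ fix a ``special'' variable $x_{k_i}\in P_i\setminus\Sigma_{j\neq i}P_j$; the $x_{k_i}$ are pairwise distinct, so $n\geq s$. The base case $s=2$ follows from Corollary \ref{t1}, since nonredundancy together with the standing assumption $\Sigma_iP_i=m$ forces $P_1+P_2=m$ and thus big size one.

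For the depth at $s\geq 3$, apply Lemma \ref{ex} to $P_s$ (whose hypothesis is witnessed by $x_{k_s}$), giving $\depth I=\min\{\depth I',\,1+\depth I''\}$ for $I'=\cap_{i<s}P_i$ and $I''=\cap_{i<s}(P_i+P_s)$. The primes $P_i+P_s$ ($i<s$) remain nonredundant (otherwise $P_i\subset\Sigma_{j\neq i}P_j$) with sum $m$, so induction gives $\depth I''=s-1$. For $I'$ the same $x_{k_i}$ still witness nonredundancy among the $s-1$ primes, but $x_{k_s}\notin\Sigma_{i<s}P_i$, so $I'$ lives in a polynomial subring on $n'<n$ variables; induction in this subring gives depth $s-1$, and the $n-n'\geq 1$ free variables contribute additively, yielding $\depth_SI'\geq s$. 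Hence $\depth I=s$.

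For the sdepth, apply Theorem \ref{sdepth} with $P_1=(x_1,\ldots,x_r)$ as a main prime (after relabelling). The set $\mathcal{F}$ contains only $\tau\subset\{2,\ldots,s\}$, because $1\in\tau$ forces $S_\tau=K$ and thus $J_\tau=0$. For such $\tau$, the primes $P_i\cap S'$ ($i\in\tau$) are nonredundant in $S'=K[x_{r+1},\ldots,x_n]$: for $i>1$, $x_{k_i}\notin P_1$ forces $k_i>r$, so $x_{k_i}\in S'$ is a witness. In addition, each $x_{k_{i'}}$ with $i'\in[s]\setminus(\tau\cup\{1\})$ is free in $S'$ from $\Sigma_{i\in\tau}(P_i\cap S')$. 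Apply the induction hypothesis in its strengthened form $\sdepth_TI\geq s'+f$ (where $s'$ is the number of nonredundant primes and $f$ the number of free variables in the ambient ring $T$; this strengthening follows from the identity $\sdepth(IT[y])=\sdepth I+1$ for a new variable $y$) to obtain $\sdepth_{S'}L_\tau\geq|\tau|+(s-|\tau|-1)=s-1$. On the other side, $J_\tau\neq 0$ is a nonzero monomial ideal, so $\sdepth_{S_\tau}J_\tau\geq 1$. Theorem \ref{sdepth} thus bounds $A_\tau\geq s$. For $A_0$: either $I_0=0$ (so $A_0=n\geq s$) or $I\cap K[x_1,\ldots,x_r]=\cap_{i>1}(P_i\cap K[x_1,\ldots,x_r])$ is a nonzero intersection of primes in $K[x_1,\ldots,x_r]$, to which induction applies after reducing to nonredundant form; combined with the $n-r\geq s-1$ free variables $x_{r+1},\ldots,x_n$ (containing all $x_{k_i}$ for $i>1$), this yields $A_0\geq s$. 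Theorem \ref{sdepth} then delivers $\sdepth_SI\geq s$.

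The main obstacle is the bookkeeping for $J_\tau$ when $\tau\subsetneq\{2,\ldots,s\}$: the primes $P_i\cap S_\tau$ for $i\notin\tau\cup\{1\}$ generally lose their special variable $x_{k_i}$ (which lies outside $S_\tau$ since $k_i>r$), so nonredundancy in $S_\tau$ can fail and primes may collapse after restriction. The argument remains robust because of the global accounting of the $s$ special variables $x_{k_1},\ldots,x_{k_s}$: each either witnesses nonredundancy for a surviving prime in a restriction or contributes as a free variable in $S'$ or $S_\tau$, so the sum $\sdepth_{S_\tau}J_\tau+\sdepth_{S'}L_\tau$ always reaches at least $s$ across all cases, and similarly for $A_0$.
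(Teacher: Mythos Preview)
Your proof is correct and takes a genuinely simpler route than the paper's for the $\sdepth$ bound. Both arguments compute $\depth_SI=s$ via Lemma~\ref{ex} and both invoke Theorem~\ref{sdepth} with $P_1$ as main prime. The difference lies in how $A_\tau$ is estimated. The paper bounds $\sdepth_{S'}L_\tau\geq |\tau|+\dim S/(P_1+\Sigma_{i\in\tau}P_i)$ and then, because the nonredundancy hypothesis can fail for $(\cap_{i\notin\tau}P_i)\cap S_\tau$, invokes Ishaq's Lemma~\ref{is} to pass to the larger ring $\tilde S_\tau$ where the hypothesis survives, obtaining $\sdepth_{S_\tau}J_\tau\geq s-|\tau|-\dim S/(P_1+\Sigma_{i\in\tau}P_i)$; the two pieces add to $s$. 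You instead observe that all the special variables $x_{k_i}$ with $i>1$ already live in $S'$: those with $i\in\tau$ witness nonredundancy for the primes of $L_\tau$, and those with $i\notin\tau\cup\{1\}$ are free variables for $L_\tau$, so induction gives $\sdepth_{S'}L_\tau\geq |\tau|+(s-1-|\tau|)=s-1$ directly, and the trivial bound $\sdepth_{S_\tau}J_\tau\geq 1$ finishes. This bypasses Lemma~\ref{is} entirely and is conceptually cleaner; what you lose is only a slightly sharper (but unnecessary) control on $J_\tau$.

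Two small points to tidy up. First, your induction on $s$ will be applied to $L_\tau$ with $|\tau|=1$, so you should include the trivial case $s=1$ (a single nonzero prime) in the base. Second, your treatment of $A_0$ is overcomplicated: the restricted primes $P_i\cap S''$ need not satisfy the nonredundancy hypothesis even after removing inclusions, so ``induction applies after reducing to nonredundant form'' is not quite right. But you do not need induction there at all: when $I\cap S''\neq 0$ one has $\sdepth_{S''}(I\cap S'')\geq 1$, and since $x_{k_2},\ldots,x_{k_s}\in S'$ you get $n-r\geq s-1$, whence $A_0\geq 1+(n-r)\geq s$. (This is essentially the paper's argument $A_0\geq 1+\dim S/P_1\geq \depth_SI$.)
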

\begin{proof} As in the above theorem we may consider only the case $\sum_{j=1}^s P_j=m$. Apply induction on $s$.
We apply Theorem \ref{sdepth} for ${\mathcal F}$ containing  as usual some $\tau\subset [s]$. Note that $P_1\cap S'=(0)$ since $P_1$ is generated in the first $r$ variables. Thus $\tau\in {\mathcal F}$ cannot contain $1$ by the construction of $\mathcal F$.
 We get $\sdepth_S I\geq \min (\{A_0\}\cup\{A_{\tau}\}_{\tau\in {\mathcal F}}\})$ for
$A_0=\sdepth(I\cap S'')S$ if $I\cap S''\not=0$ or $A_0=n$ otherwise, and
$$A_{\tau}\geq\sdepth_{S_{\tau}} ((\cap_{i\not \in\tau}P_i)\cap S_{\tau})+\sdepth_{S'}(\cap_{i\in \tau}P_i\cap S'),$$
where $S_{\tau}=K[\{x_i: 1\leq i\leq r, x_i\not\in \Sigma_{j\in \tau} P_j \}]$.
Note that  $\cap_{j\in \tau}P_j\cap S'$ satisfies  our assumption  because if $P_k\cap S'\subset \Sigma_{j\in \tau, j\not =k} (P_k\cap S')$ and  we get $P_k\subset  \Sigma_{1=j\not =k}^s P_j$ which is false. Thus by induction hypothesis we have
$$\sdepth_{S'}(\cap_{i\in \tau}P_i\cap S')\geq
\depth_{S'}(\cap_{i\in \tau}P_i\cap S')=$$
$$|\tau|+\dim S'/(\cap_{i\in \tau}P_i\cap S')=|\tau|+\dim S/(P_1+\Sigma_{i\in \tau} P_i),$$
using Theorem \ref{-s}.
Let ${\tilde S}_{\tau}=S_{\tau}[\{x_j:j>r, x_j\not \in  \Sigma_{i\in \tau} P_i)\}].$ Note that $(\cap_{i\not \in\tau}P_i)\cap {\tilde S}_{\tau}$ satisfies our hypothesis even though
 $(\cap_{i\not \in\tau}P_i)\cap S_{\tau}$ may  not. Indeed, if $P_i\cap {\tilde S}_{\tau}  \subset \Sigma_{j\not\in \tau, j\not =i} P_j$
 for some $i\not \in \tau$ then $P_i \subset \Sigma_{1=j\not =i}^s P_j$ which is false. By Lemma \ref{is} we have
 $$\sdepth_{S_{\tau}} ((\cap_{i\not \in\tau}P_i)\cap S_{\tau})\geq \sdepth_{{\tilde S}_{\tau}} ((\cap_{i\not \in\tau}P_i)\cap {\tilde S}_{\tau})-|\{i>r:x_i\not \in \Sigma_{j\in \tau} P_j\}|\geq $$
 $$s-|\tau|-\dim S/(P_1+\Sigma_{i\in \tau} P_i),$$
 using the induction hypothesis.
 Thus $A_{\tau}\geq s=\depth_SI$ by the above theorem.
 Finally note that if $I\cap S''\not =(0)$ then $$A_0=\sdepth_{S''} (I\cap S'')+n-r\geq 1+\dim S/P_1\geq 1+\depth_S S/I=\depth_SI$$ using \cite[Lemma 3.6]{HVZ}.
\end{proof}

\vskip 0.5 cm
\section{Depth on big size two}
 Let $I=\cap_{i=1}^s P_i$, $s\geq 3$  be a reduced intersection of monomial prime ideals of $S$.    Assume that $\sum_{i=1}^s P_i=m$
and the big size of $I$ is two. We may suppose that $P_1+P_2=(x_1,\ldots,x_r)$ for some $r<n$. We set $$q=\min (\dim S/(P_i+P_j): j\not =i,P_i+P_j\not =m).$$ Thus $q\leq n-r.$ Set $S''=K[x_1,\ldots,x_r]$, $S'=K[x_{r+1},\ldots,x_n]$.
\begin{Lemma}\label{eas} $\depth_S S/I\leq 1+q$.
\end{Lemma}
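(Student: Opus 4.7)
The plan is to identify $S/I$ with the Stanley--Reisner ring $K[\Delta]$ of the simplicial complex $\Delta$ on $[n]$ whose facets are $F_i:=[n]\setminus A_i$ (where $P_i=(x_j:j\in A_i)$), and to exhibit a nonzero element of $\Coh{q+1}{S/I}$ via Hochster's formula; since $\depth_S(S/I)$ equals the least $i$ with $\Coh{i}{S/I}\ne 0$, this yields $\depth_S S/I\le q+1$.

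I would first relabel the primes and the variables so that the pair $(1,2)$ achieves the minimum in the definition of $q$; this is compatible with the standing choice $P_1+P_2=(x_1,\ldots,x_r)$ upon setting $r=n-q$. Take $F:=F_1\cap F_2=\{r+1,\ldots,n\}$, a face of $\Delta$ of cardinality $|F|=q$. The key combinatorial observation is that the only facets of $\Delta$ containing $F$ are $F_1$ and $F_2$: indeed $F\subset F_k$ is equivalent to $A_k\subset A_1\cup A_2=[r]$, i.e.\ $P_k\subset P_1+P_2$; this holds trivially for $k\in\{1,2\}$, while for $k\ge 3$ the big-size-two hypothesis forces $P_1+P_2+P_k=m$ and $P_1+P_2\ne m$, ruling it out. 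The resulting facets $F_1\setminus F$ and $F_2\setminus F$ of $\lk_\Delta F$ are disjoint, and each is nonempty since reducedness of the intersection forces $P_i\not\subset P_j$, equivalently $F_i\setminus F_j\ne\emptyset$. Thus $\lk_\Delta F$ is a disjoint union of two simplices, so it is disconnected and $\tilde H^0(\lk_\Delta F;K)\ne 0$.

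Hochster's formula applied in multidegree $-F$ then gives
$$\dim_K \Coh{q+1}{S/I}_{-F}=\dim_K\tilde H^{q+1-|F|-1}(\lk_\Delta F;K)=\dim_K\tilde H^0(\lk_\Delta F;K)\ge 1,$$
so $\Coh{q+1}{S/I}\ne 0$ and the desired bound $\depth_S S/I\le q+1$ follows.

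The main obstacle is the combinatorial step: one must verify that among all facets of $\Delta$, exactly $F_1$ and $F_2$ contain $F$, and it is here that the big-size-two hypothesis is used essentially. A purely algebraic alternative would apply the Depth Lemma to the exact sequence $0\to S/I\to S/(P_1\cap P_2)\oplus S/(\cap_{k\ge 3}P_k)\to S/((P_1\cap P_2)+\cap_{k\ge 3}P_k)\to 0$ and use the standard computation $\depth S/(P_1\cap P_2)=q+1$ arising from $0\to S/(P_1\cap P_2)\to S/P_1\oplus S/P_2\to S/(P_1+P_2)\to 0$; but one then needs an additional bound $\depth S/((P_1\cap P_2)+\cap_{k\ge 3}P_k)\le q$, which is less immediate, so I would favour the Hochster route.
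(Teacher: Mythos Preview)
Your proof via Hochster's formula is correct: the identification of the facets of $\Delta$ containing $F$, the disconnectedness of $\lk_\Delta F$, and the cohomological conclusion all go through as written. The key combinatorial fact---that $P_k\not\subset P_1+P_2$ for $k\ge 3$---is precisely what the paper isolates too, but the paper deploys it differently and more elementarily. Instead of invoking local cohomology, it picks a monomial $u\in\bigcap_{k>2}P_k\setminus(P_1+P_2)$ (a product of one variable from each $P_k\setminus(P_1+P_2)$ works), observes that $(I:u)=P_1\cap P_2$, and applies the inequality $\depth S/I\le\depth S/(I:u)$ from \cite{R}. Since $\depth S/(P_1\cap P_2)=1+\dim S/(P_1+P_2)$ and the same argument applies to any pair $(i,j)$ with $P_i+P_j\ne m$, taking the minimum yields the bound.

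The paper's route is shorter and avoids Stanley--Reisner machinery; your route, while heavier, has the merit of explicitly exhibiting a nonzero graded piece of $\Coh{q+1}{S/I}$, which is more informative if one later needs the local cohomology itself. Note also that the colon-ideal trick is the clean algebraic substitute for the Depth-Lemma alternative you sketch at the end: multiplication by $u$ embeds $S/(I:u)\hookrightarrow S/I$, giving the one-sided depth inequality directly without any need to bound the depth of $(P_1\cap P_2)+\bigcap_{k\ge 3}P_k$.
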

\begin{proof}  Note that for $i>2$ we have $P_i\not \subset P_1+P_2$. This is because, otherwise, $P_1+P_2=P_1+P_2+P_i=m$
by the condition  $t(I)=2$, which gives a contradiction. Then we may find a monomial $u\in \cap_{i>2}^s P_i\setminus (P_1+P_2)$ and we have $(I:u)=P_1\cap P_2$. Thus $$\depth_SS/I
\leq  \depth_S S/(I:u)=\depth_S S/(P_1\cap P_2)=1+\dim S/(P_1+P_2)$$ by \cite[Corollary 1.3]{R}, the last equality being a consequence of  Depth Lemma applied to the exact sequence
$$0\to S/(P_1\cap P_2)\to S/P_1\oplus S/P_2\to S/(P_1+P_2)\to 0.$$
 In this way we see that
$$\depth_SS/I\leq 1+\min (\dim S/(P_i+P_j): j\not =i,P_i+P_j\not =m).$$
\end{proof}

\begin{Lemma} \label{good} If $P_k+P_e=m$ for all distinct $k,e>2$,  then the following statements hold:
\begin{enumerate}
\item{} $\depth_S S/I\in \{1,2,1+q\}$,
\item{} $\depth_S S/I=1$ if and only if there exists $j>2$ such that $P_1+P_j=m=P_2+P_j$,
\item{}  $\depth_S S/I>2$ if and only if $q>1$ and each $j>2$ satisfies either $$P_1+P_j\not=m =P_2+P_j,\ \mbox{or}$$
$$P_2+P_j\not=m =P_1+P_j,$$
\item{}  $\depth_S S/I=2$ if and only if  both the following conditions hold:
\begin{enumerate}
\item{}  each $j>2$ satisfies either $P_1+P_j\not=m$ or
$P_2+P_j\not=m,$
\item{} $q=1$  or there exists an index $k>2$ such that  $$P_1+P_k\not=m \not=P_2+P_k.$$
\end{enumerate}

\end{enumerate}
\end{Lemma}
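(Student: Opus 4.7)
The proof proceeds by induction on $s \geq 3$. The base case $s = 3$ follows directly from Lemma~\ref{ex}: since $P_3 \not\subset P_1+P_2$ (by big size two),
\[\depth_S I = \min\{\depth_S(P_1 \cap P_2),\; 1 + \depth_S((P_1+P_3)\cap(P_2+P_3))\},\]
and a case analysis on whether $3$ lies in $A \cap B$, $A \triangle B$, or neither (using that $\Sigma_i P_i = m$ forces $P_1+P_3$ and $P_2+P_3$ to be distinct primes whenever both are proper) produces exactly the values $1$, $1+q$, and $2$ predicted by (2)--(4), where $A = \{j > 2 : P_1 + P_j = m\}$ and $B = \{j > 2 : P_2 + P_j = m\}$.

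For $s \geq 4$ I use the Mayer--Vietoris sequence
\[0 \to S/I \to S/(\cap_{i\neq j}P_i) \oplus S/P_j \to S/(\cap_{i\neq j}(P_i+P_j)) \to 0\]
for a well-chosen $j$. The cokernel simplifies dramatically because $P_i + P_j = m$ for every $i > 2$ with $i \neq j$: taking $j = j_0 \in A \cap B$ makes every $P_i + P_{j_0} = m$, so the cokernel is $S/m$, and the Depth Lemma applied to both sides forces $\depth_S S/I = 1$ (the upper bound comes from $\depth S/m = 0 \geq \min(\depth S/I - 1, \depth B)$ once one notes the middle term $B$ has depth $\geq 1$). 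The converse direction of (2) is topological: inspect the Stanley--Reisner complex $\Delta$ of $I$, whose facets $F_i = [n] \setminus V_i$ satisfy $F_j \cap F_k = \emptyset$ for $j,k > 2$, $j \neq k$, and $F_j \cap F_i = \emptyset$ (with $i \in \{1,2\}$) if and only if $j \in A$ (resp.~$B$); hence $\Delta$ is disconnected precisely when $A \cap B \neq \emptyset$, and otherwise a short check (that $V_j \supset \{r+1,\ldots,n\}$ together with $V_1 \cup V_2 = [r]$ rules out isolated vertices) gives $\depth_S S/I \geq 2$.

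With $A \cap B = \emptyset$ secured, Lemma~\ref{eas}'s bound $\depth_S S/I \leq 1+q$ together with $\depth_S S/I \geq 2$ handles part~(4) when $q = 1$. For the sub-case of (4) with $q > 1$ and some $k \notin A \cup B$: the SES with $j = k$ has cokernel $(P_1+P_k) \cap (P_2+P_k)$, an intersection of two proper primes summing to $m$, hence of depth one. A direct check shows $k \notin A \cup B$ forces $|F_k| \geq 2$ (a single missing variable would lie outside both $V_1$ and $V_2$, contradicting $V_1 \cup V_2 = [r]$), so $\dim S/P_k \geq 2$; induction gives $\depth_S S/(\cap_{i\neq k}P_i) \geq 2$; the second Depth Lemma inequality then yields $\depth_S S/I \leq 2$, forcing equality. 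For part~(3), with $q > 1$ and $A \cup B = \{3,\ldots,s\}$, pick $j$ on the side realizing the minimum (say $j \in A$ with $\dim S/(P_2+P_j) = q$); reducedness forces $P_2 \not\subset P_j$ and hence $\depth_S S/P_j \geq \dim S/(P_2+P_j) + 1 = q+1$; the reduced system retains $A' \cap B' = \emptyset$ and $A' \cup B' = \{3,\ldots,s\}\setminus\{j\}$ with $q' \geq q > 1$, so induction yields $\depth_S S/(\cap_{i\neq j}P_i) = 1+q' \geq q+1$; combined with the cokernel $P_2+P_j$ having depth $q$, all three Depth Lemma terms reach $\geq q+1$, giving $\depth_S S/I \geq 1+q$, and Lemma~\ref{eas} caps this to equality.

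Part~(5) follows by applying Theorem~\ref{sdepth} with $P_1$ as the main prime and matching the quantities $A_0$ and $A_\tau$ to the depth values of (2)--(4) via induction on $s$, in the spirit of Theorem~\ref{s}'s proof. The main obstacle is the tight lower bound in part~(3): getting all three Depth Lemma terms to simultaneously attain $1+q$ requires the induction hypothesis to deliver the \emph{exact} value $1+q'$ (not merely $\geq 2$) for the reduced intersection, and the pivot $j$ must be aligned with a pair realizing the minimum of $q$; without this coordination, the induction does not propagate.
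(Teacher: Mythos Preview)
Your treatment of parts (1)--(4) is correct but follows a genuinely different route from the paper. The paper splits the intersection as $(P_1\cap P_2\cap P_3)$ versus $(P_1\cap P_2\cap P_4\cap\cdots\cap P_s)$, obtaining the clean cokernel $S/(P_1\cap P_2)$; since Lemma~\ref{eas} bounds every term by $\depth S/(P_1\cap P_2)$, the Depth Lemma gives $\depth S/I$ as the minimum of two quantities to which induction applies directly, and (2)--(4) fall out of the case $s=3$ from \cite{AP}. You instead peel off a single $P_j$, choosing $j$ adaptively ($j\in A\cap B$, $j\notin A\cup B$, or $j$ realizing $q$), and you replace the inductive converse of (2) by a Stanley--Reisner connectedness argument. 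Your approach is more hands-on and makes the three depth values arise from three structurally different exact sequences; the paper's is more uniform. One small remark: your insistence that the pivot $j$ realize the minimum $q$ in part~(3) is unnecessary, since any $j\in A\cup B$ works (the pair $(P_1,P_2)$ survives removal of $j$, so $q'\leq n-r$ and all relevant dimensions are $\geq q$ anyway).

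Part (5), however, is not proved. A one-line appeal to Theorem~\ref{sdepth} ``in the spirit of Theorem~\ref{s}'' does not suffice, and your closing paragraph is commentary rather than argument. The paper's proof here is substantive and does not take $P_1$ as the main prime: it applies Theorem~\ref{sdepth} with $r$ chosen so that $P_1+P_2=(x_1,\ldots,x_r)$, which forces every $\tau\in\mathcal F$ to be a singleton $\{i\}$ with $i>2$ (any larger $\tau$ would have $\Sigma_{k\in\tau}P_k=m$, killing $J_\tau$). For each such $\tau_i$ one must then argue that the restricted ideal $(\cap_{j\neq i}P_j)\cap S_{\tau_i}$ has big size $\leq 1$ and invoke Corollary~\ref{t1}, and separately analyze $J_iS+P_i=(P_1+P_i)\cap(P_2+P_i)$ case by case against the conditions in (3) and (4). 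The bound on $A_0$ requires induction on $n$ (not just $s$), which is why the paper inducts on $s+n$. None of this is carried out in your proposal, and taking $P_1$ alone as the main prime would admit non-singleton $\tau$'s that you have not addressed.
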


\begin{proof}
Apply induction on $s+n$, $s\geq 3$. If $s=3$ then we may apply \cite[Proposition 2.1, Theorem 2.6]{AP}. Suppose that $s>3$.
By Lemma \ref{e} applied for $J=P_1\cap P_2$, $E=P_3$, $F=P_4+\ldots +P_s$ we have the following exact sequence
$$0\to S/I\to S/(P_1\cap P_2\cap P_3)\oplus S/(P_1 \cap P_2\cap P_4\cap \ldots\cap P_s)\to S/(P_1\cap P_2)\to 0$$
because $P_3+P_k=m$ for all $k>3$. Using  \cite[Corollary 1.3]{R} as in the proof of Lemma \ref{eas}, any module from the above exact sequence has depth $\leq
\depth_S S/(P_1\cap P_2)$. Thus
$$\depth_SS/I=\min (\depth_S  S/(P_1\cap P_2\cap P_3),\depth_S S/(P_1 \cap P_2\cap P_4\cap \ldots\cap P_s) )$$
by Depth Lemma \cite[Lemma 1.3.9]{Vi}. Using the induction hypothesis, we get
$$\depth_S  S/(P_1\cap P_2\cap P_3), \ \ \depth_S S/(P_1 \cap P_2\cap P_4\cap \ldots\cap P_s)\in \{1,2,1+q\}$$
 because any three prime ideals of $(P_i)$ have the sum $m$.
Hence (1) holds. Note that $\depth_SS/I=1$ if and only if either $\depth_S  S/(P_1\cap P_2\cap P_3)=1$, or \\
$\depth_S S/(P_1 \cap P_2\cap P_4\cap \ldots\cap P_s)=1$
and (2) holds because of the induction hypothesis (see also Lemma \ref{1}). Similarly, (3), (4)  holds by induction hypothesis relying in fact on the case $s=3$ stated in \cite{AP}.
\end{proof}
\begin{Lemma} \label{good1} If $P_k+P_e=m$ for all distinct $k,e>2$,  then
 $\sdepth_S I\geq \depth_S I$.
\end{Lemma}
\begin{proof}
We apply Theorem \ref{sdepth} to ${\mathcal F}$ containing some $\tau_i=\{ i\}$, $2<i\leq s$
(note that $P_i+P_j=m$ for all $2<i<j\leq s$ and so $\mathcal F$ does not contain $\tau=\{i,j\}$).
We get $\sdepth_S I\geq \min (\{A_0\}\cup\{A_{\tau_i}\}_{\tau_i\in {\mathcal F}}\})$ for
$A_0=\sdepth_S(I\cap S'')S$ if $I\cap S''\not=0$ or $A_0=n$ otherwise, and
$$A_{\tau_i}\geq \sdepth_{S_{\tau_i}} ((\cap_{j=1, j\not =i}^{s}P_j)\cap S_{\tau_i})+\sdepth_{S'}(P_i\cap S'),$$
where $S_{\tau_i}=K[\{x_j:x_j\in S'',x_j\not \in P_i\}]$. Note that the big size of $J_i=(\cap_{j=1, j\not =i}^{s}P_j)\cap S_{\tau_i}$ is 1 or zero,
because if $(P_k+P_e)\cap S_{\tau_i}$ is not the maximal ideal of $S_{\tau_i}$ for some two different  $k,e$ which  are not $i$,  then  $P_k+P_e+P_i\not =m$ contradicting $t(I)=2$.
By Corollary \ref{t1} we get $$\sdepth_{S_{\tau_i}}J_i\geq \depth_{S_{\tau_i}}J_i=1+\depth_{S_{\tau_i}} S_{\tau_i}/J_i=1+\depth_S S/(J_iS+P_i).$$
Then $A_{\tau_i}\geq 2+\depth_S S/(J_iS+P_i)$. By our hypothesis $$J_iS+P_i=((P_1\cap S_{\tau_i})S\cap (P_2\cap S_{\tau_i})S)+P_i.$$ But $(P_k\cap  S_{\tau_i})S+P_i=P_k+P_i$ for $k=1,2$ and so
$J_iS+P_i=(P_1+P_i)\cap (P_2+P_i)$. If $P_1+P_i=m\not =P_2+P_i$ then $\depth_S S/(J_iS+P_i)=\dim S/(P_2+P_i)\geq q$. Hence $A_{\tau_i}\geq \depth_S I $ using (1) of the above lemma. If $P_1+P_i\not =m\not =P_2+P_i$ then we get $A_{\tau_i}\geq 3=\depth_S I$ using (4) of the above lemma.

 Suppose that $I\cap S''\not=0$. When $t(I\cap S'')= 1$ we have $\sdepth_{S''}(I\cap S'')\geq 2$ by Corollary \ref{t1} and so $A_0\geq 2+n-r\geq 2+q\geq \depth_SI$. When  $t(I\cap S'')= 2$, since less variables are involved, we can use the induction hypothesis and we have
$$A_0\geq \depth_S(I\cap S'')S= n-r+\depth_{S''}(I\cap S'')\geq q+2\geq  \depth_SI.$$
Note that in this case $I\cap S''$ cannot be the homogeneous maximal ideal in $S''$.
\end{proof}

Next we will consider another case when $t(I)=2$, but with the following property:

($*$) whenever there exist $i\not =j$ in $[s]$ such that $P_i+P_j\not =m$  there exist also $k\not =e$ in $[s]\setminus\{i,j\}$
such that $P_k+P_e\not =m$.

\noindent This is exactly the complementary case to the one solved by the above lemma. As before we may suppose that $P_1+P_2\not =m$. Now by ($*$) we may  suppose $P_s+P_{s-1}\not =m$.

\begin{Lemma}\label{bad} If $t(I)=2$ and $I$ satisfies ($*$) then
\begin{enumerate}
\item{} $\depth_S S/I\in \{1,2,1+q\}$.
\item{}  $\depth_S S/I=1$ if and only if  after a renumbering of $(P_i)$ there exists $1\leq c<s$ such that $P_i+P_j=m$ for each $c<j\leq s$ and $1\leq i\leq c$.
\end{enumerate}
\end{Lemma}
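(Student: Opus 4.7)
The plan is to prove (1) and (2) by induction on the number $s$ of primes, noting that (*) together with $t(I)=2$ forces $s\ge 4$. After renumbering I assume $P_1+P_2\ne m$ and $P_{s-1}+P_s\ne m$. The main device is the Mayer--Vietoris exact sequence
$$0\to S/I\to S/A\oplus S/B\to S/(A+B)\to 0,$$
with $A=P_1\cap P_2$ and $B=\cap_{i=3}^{s}P_i$. Using the monomial-prime identity $\cap_\alpha P_\alpha+\cap_\beta Q_\beta=\cap_{\alpha,\beta}(P_\alpha+Q_\beta)$, the ideal $A+B$ is the intersection of the non-$m$ primes $P_j+P_k$, $(j,k)\in\{1,2\}\times\{3,\ldots,s\}$. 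Since $t(I)=2$, any two distinct such primes involve at least three distinct indices of $[s]$, so their sum is a sum of three or more $P_i$'s, hence equals $m$; thus $A+B$ has big size at most one and Lemma \ref{1} delivers $\depth_S S/(A+B)=1$ in the generic case (at least two non-$m$ primes in the intersection), with $\depth_S S/(A+B)\ge q$ in the sub-case of a single prime and $A+B=S$ in the sub-case where all such primes equal $m$.

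For the main depth bookkeeping I compute $\depth_S S/A=1+(n-r)\ge 1+q\ge 2$, and by induction -- Corollary \ref{t1} when $t(B)=1$, Lemma \ref{good} or the inductive case of Lemma \ref{bad} when $t(B)=2$, and a direct computation when $s=4$ (so $B=P_3\cap P_4$ has $\depth_S S/B=1+\dim S/(P_3+P_4)\ge 2$) -- obtain $\depth_S S/B\in\{1,2,1+q_B\}$ with $q_B\ge q$. Combining the Depth Lemma applied to the exact sequence with the ceiling $\depth_S S/I\le 1+q$ from Lemma \ref{eas} pins $\depth_S S/I$ into $\{1,2,1+q\}$: when $\depth_S S/B\ge 2$ both directions of the Depth Lemma force $\depth_S S/I=2$ (or $=1+q$ via the degenerate $A+B$ sub-case), and when $\depth_S S/B=1$ the ceiling controls the residual cases. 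For (2), the direction $(\Leftarrow)$ is Lemma \ref{1}. For $(\Rightarrow)$, if $\depth_S S/I=1$ the sequence analysis forces $\depth_S S/B=1$, so by induction $B$ admits a partition $\{3,\ldots,s\}=G_1\sqcup G_2$ with cross-sums $m$; since $P_1+P_2\ne m$ the primes $P_1,P_2$ must join a common side, and a secondary Mayer--Vietoris splitting off $P_1$ (resp.\ $P_2$) is used to argue that one of the extensions $\{1,2\}\cup G_1$, $\{1,2\}\cup G_2$ satisfies the cross-sum condition, yielding the desired partition of $[s]$.

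The hardest step will be the $(\Rightarrow)$ direction of (2). The $B$-partition is not uniquely determined, and one must rule out a combinatorial pathology in which every $B$-partition obstructs both placements of $\{P_1,P_2\}$. Here the hypothesis (*) -- providing the second disjoint non-$m$ pair $(P_{s-1},P_s)$ -- is essential, as it rigidifies the admissible partitions; converting this rigidity into an actual extension requires a careful case analysis, likely via $\depth_S S/(I:u)$ for suitably chosen monomial colons (in the spirit of Lemma \ref{eas}) to couple the constraints coming from $\{P_1,P_2\}$ with those coming from $\{P_{s-1},P_s\}$.
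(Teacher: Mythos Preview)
Your decomposition $A=P_1\cap P_2$, $B=\cap_{i\ge 3}P_i$ is a genuinely different split from the paper's, and the observation that $A+B$ has big size $\le 1$ is correct and pleasant. However, there are real gaps.

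\textbf{Part (1), the case $\depth_S S/B=1$.} Here your Mayer--Vietoris sequence has $\depth_S(S/A\oplus S/B)=1$ and $\depth_S S/(A+B)=1$. The Depth Lemma then only yields $\depth_S S/I\ge 1$; it gives \emph{no} upper bound, so together with Lemma~\ref{eas} you only know $\depth_S S/I\in\{1,\ldots,1+q\}$, not $\{1,2,1+q\}$. Saying ``the ceiling controls the residual cases'' is not an argument. (A small side error: when every cross-sum equals $m$ you get $A+B=m$, not $A+B=S$, so $S/(A+B)=K$ has depth $0$, and the sequence again fails to pin down $\depth_S S/I$; that case is instead handled directly by Lemma~\ref{1}.)

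\textbf{Part (2), direction $(\Rightarrow)$.} You concede this is the hard step and only sketch a plan (``a secondary Mayer--Vietoris splitting off $P_1$ \dots likely via $\depth_S S/(I{:}u)$''). This is not a proof. The difficulty is exactly that your split peels off \emph{two} primes at once, so the inductively obtained partition of $\{3,\ldots,s\}$ must be extended by placing $P_1,P_2$ simultaneously; nothing you have written forces a compatible placement.

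\textbf{How the paper avoids both problems.} The paper splits differently: with $J=P_1\cap\cdots\cap P_{s-2}$ it uses
\[
0\to S/I\to S/(J\cap P_{s-1})\oplus S/(J\cap P_s)\to S/\bigl(J\cap(P_{s-1}+P_s)\bigr)\to 0,
\]
whose right-hand term has depth $1$ by Lemma~\ref{1}. If the middle has depth $>1$ then $\depth_S S/I=2$. Otherwise, say $\depth_S S/(J\cap P_{s-1})=1$; by Lemma~\ref{good} or the induction hypothesis one gets a partition of $[s-1]$, say $\{1,\ldots,k\}\sqcup\{k+1,\ldots,s-1\}$. Now a \emph{second} sequence
\[
0\to S/I\to S/(P_1\cap\cdots\cap P_k\cap P_s)\oplus S/(P_{k+1}\cap\cdots\cap P_s)\to S/P_s\to 0
\]
has $S/P_s$ on the right, and since every module in sight has $P_s$ as an associated prime, all depths are $\le\dim S/P_s$. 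The Depth Lemma then gives an \emph{equality} $\depth_S S/I=\min\{\cdots\}$, to which induction (on strictly fewer primes) applies, proving (1) and simultaneously producing the partition required for (2). The key point is that removing one prime at a time lets the second sequence terminate in a single prime; your two-at-a-time split loses this.
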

\begin{proof} We  use induction on $s\geq 3$, with the case $s=3$ having been covered  in \cite[Proposition 2.1, Theorem 2.6]{AP}. Now we assume $s>3$ and set $J= P_1\cap \ldots \cap P_{s-2}$. Since $t(I)=2$, $P_i+P_{s-1}+P_s=m$ for all $i<s-1$. Note that there exist no $i<s-1$ such that $P_i\subset P_{s-1}+P_s$ because otherwise $P_{s-1}+P_s=P_i+P_{s-1}+P_s=m$, which is false.
Thus, in the exact sequence (apply Lemma \ref{e})
$$0\to S/I\to S/(J \cap P_{s-1})\oplus S/(J\cap P_s)\to S/(J\cap (P_{s-1}+P_s))\to 0$$
we have  $\depth_S  S/(J\cap (P_{s-1}+P_s))=1$ by Lemma \ref{1}.
If
$$(+)\ \ \depth_S(S/(J \cap P_{s-1})\oplus S/(J\cap P_s))>1$$
 then
$\depth_S S/I=2$. Otherwise, we may suppose that $\depth_S(S/(J \cap P_{s-1}))=1$, where we  apply part (2) of Lemma \ref{good}.
Thus, after a renumbering of $(P_i)$, there exists $1\leq k<s-1$ such that $P_i+P_j=m$ for each $k<j\leq s-1$ and $1\leq i\leq k$.
In the following exact sequence (again apply Lemma \ref{e} for $J=P_s$, $E=P_1\cap\ldots\cap P_k$, $F=P_{k+1}\cap \ldots\cap P_{s-1}$)
 $$0\to S/I\to S/( P_1\cap \ldots \cap P_{k} \cap P_{s})\oplus S/(P_{k+1}\cap \ldots\cap P_s)\to S/P_s\to 0$$
 all the modules have depth $\leq \depth_S S/P_s$ by \ref{app}. It follows
 $$ \depth_S S/I=\min (\depth_SS/( P_1\cap \ldots \cap P_{k} \cap P_{s}), \depth_S S/(P_{k+1}\cap \ldots\cap P_s))$$
and applying Lemma \ref{good}  we get (1).

In (2) the sufficiency follows from Lemma \ref{1}.
If $\depth_S S/I=1$  we will get,  say,
 $\depth_SS/( P_1\cap \ldots \cap P_{k}  \cap P_{s})=1$. Now
use Lemma \ref{good} and our induction hypothesis. After a renumbering of $(P_i)_{i<k}$ there exists $1\leq c\leq k$ such that $P_i+P_j=m$ for each
$1\leq i\leq c$ and $c<j\leq k$ or $j=s$. Thus, using our assumptions on $k$ we get $P_i+P_j=m$ for each $c<j\leq s$ and $1\leq i\leq c$.
\end{proof}
\vskip 0.5 cm
\section{Intersections of four prime ideals}
Let $I=\cap_{i=1}^4 P_i$   be an  irredundant intersection of monomial prime ideals of $S$.    Assume that $\sum_{i=1}^4 P_i=m$
and the big size of $I$ is two. Thus we may further assume  $P_1+P_2\not =m$ and $P_1=(x_1,\ldots,x_r)$, $r<n$.
  Set $$q=\min (\dim S/(P_i+P_j): j\not =i,P_i+P_j\not =m),$$  $S''=K[x_1,\ldots,x_r]$, $S'=K[x_{r+1},\ldots,x_n].$
\begin{Proposition} \label{2,4}  In the above setting  $\sdepth_SI\geq \depth_SI$.
\end{Proposition}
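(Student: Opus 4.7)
The plan is to apply Theorem~\ref{sdepth} with $P_1=(x_1,\ldots,x_r)$ as the main prime, after first disposing of the easy configuration. If $P_3+P_4=m$, then the hypothesis of Lemma~\ref{good} holds and item~(5) of that lemma immediately gives $\sdepth_S I\geq \depth_S I$. We may therefore assume $P_3+P_4\neq m$, i.e.\ property~(*) of Lemma~\ref{bad} holds for $s=4$, giving $\depth_S S/I\in\{1,2,1+q\}$.

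To apply Theorem~\ref{sdepth}, first identify $\mathcal{F}$: any $\tau\ni 1$ is excluded because $L_\tau\subseteq P_1\cap S'=0$, and $\tau=\{2,3,4\}$ is excluded because $P_2+P_3+P_4=m$ forces $S_\tau=K$ and hence $J_\tau=0$. Thus $\mathcal{F}\subseteq\{\{2\},\{3\},\{4\},\{2,3\},\{2,4\},\{3,4\}\}$. For each singleton $\tau=\{i\}$, the key observation from the proof of Lemma~\ref{good}(5) still applies: $J_{\{i\}}=P_1\cap P_j\cap P_k\cap S_{\{i\}}$ has big size $\leq 1$, because a bad pair $\{P_k,P_e\}$ inside $J_{\{i\}}$ would give $P_k+P_e+P_i\neq m$, contradicting $t(I)=2$. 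Hence Corollary~\ref{t1} bounds $\sdepth_{S_{\{i\}}} J_{\{i\}}$, while $L_{\{i\}}=P_i\cap S'$ is a monomial prime of $S'$ whose Stanley depth is trivially computable. For each pair $\tau$, the ideal $J_\tau=P_1\cap P_k\cap S_\tau$ (with $k$ the remaining index) has big size $1$, since $P_1\cap S_\tau$ is the maximal ideal of $S_\tau$; and $L_\tau=P_i\cap P_j\cap S'$ is an intersection of two primes, bounded via Theorem~\ref{s} with $s=2$ or via Corollary~\ref{t1} when the big size drops. For $A_0$, either $I\cap S''=0$ giving $A_0=n$, or $I\cap S''$ is an intersection of monomial primes in $S''$ of big size $\leq 2$ which is bounded inductively on $n$.

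The main obstacle is the tight case $\depth_S I=2+q$ with $q\geq 2$, where $A_\tau\geq 2+q$ must hold for every $\tau\in\mathcal{F}$. For instance, the singleton estimate unpacks as $A_{\{i\}}\geq 3+\dim S/(P_1+P_i)$, which suffices when $P_1+P_i\neq m$ (since then $\dim S/(P_1+P_i)\geq q-1$), but is insufficient when $P_1+P_i=m$. Handling the latter will require sharpening $\sdepth_{S_{\{i\}}} J_{\{i\}}$ beyond the generic Corollary~\ref{t1} bound, exploiting the finer structure of $J_{\{i\}}$---identifying redundant primes or extracting extra dimension drops from the other bad pairs---and symmetrically for the pair $\tau$'s and $A_0$. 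A careful subcase analysis based on which bad pair achieves the minimum defining $q$, and on whether it lies on the $S''$-side or the $S'$-side of $S=S''\otimes_K S'$, is the principal source of technical difficulty.
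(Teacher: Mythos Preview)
Your overall plan—dispose of the non-$(*)$ case via Lemma~\ref{good}, then apply Theorem~\ref{sdepth} with $P_1$ as main prime—matches the paper. But you miss the one observation that makes the whole argument go through, and this is exactly why you end up worrying about a ``tight case'' that does not exist.

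The key fact is that for $s=4$ under $(*)$ one always has $\depth_S S/I\le 2$, i.e.\ $\depth_S I\le 3$. The paper extracts this from the proof of Lemma~\ref{bad}: in the exact sequence with middle term $S/(P_1\cap P_2\cap P_3)\oplus S/(P_1\cap P_2\cap P_4)$, either both summands have depth $>1$ and then $\depth_S S/I=2$, or (say) $\depth_S S/(P_1\cap P_2\cap P_3)=1$, which by \cite{AP} forces $P_1+P_3=P_2+P_3=m$. Now $(*)$ for $s=4$ says bad pairs come in complementary pairs; since $\{2,3\}$ and $\{1,3\}$ are \emph{not} bad, their complements $\{1,4\}$ and $\{2,4\}$ are not bad either, so $P_1+P_4=P_2+P_4=m$ and Lemma~\ref{1} gives $\depth_S S/I=1$. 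Thus the value $1+q$ with $q\ge 2$ never occurs, and it suffices to show $A_\tau\ge 3$ for every $\tau\in\mathcal F$. The paper then gets these bounds by fairly crude estimates (ceilings from \cite[Lemma~4.3]{PQ}), with a handful of subcases when certain primes are nested in $S'$ or $S''$; your big-size-one route through Corollary~\ref{t1} would work here too. The elaborate subcase analysis you anticipate in your last paragraph is unnecessary.

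A minor point: ``$P_3+P_4\neq m$'' is not literally equivalent to $(*)$; you need to say that if $(*)$ fails then some bad pair has complementary pair summing to $m$, and after renumbering that bad pair as $\{1,2\}$ you are back in the hypothesis of Lemma~\ref{good}. The paper phrases the reduction this way.
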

\begin{proof} Using Lemma \ref{good1} we may suppose that $I$ satisfies ($*$) and  $P_3+P_4\not =m$.  If
$\depth_S(S/(P_1\cap P_2 \cap P_3)\oplus S/(P_1\cap P_2\cap P_4))>1$, the proof of Lemma \ref{bad} (see $(+)$) shows that
 $\depth_SS/I=2$. Otherwise, we may assume that $\depth_SS/(P_1\cap P_2 \cap P_3)=1$. It follows from \cite[Proposition 2.1]{AP} $P_1+P_3=P_2+P_3=m$,  since $P_1+P_2\not =m$.
Then ($*$)  implies that $P_1+P_4=P_2+P_4=m$ and we have  $\depth_SS/I=1$ by Lemma \ref{1}. Thus $\depth_SI\leq 3$ if $I$ satisfies ($*$) even $\depth_SI= 2$ if  $P_1+P_3=P_1+P_4=P_2+P_3=P_2+P_4=m$.

Apply Theorem \ref{sdepth} for the main prime $P_1$ and
${\mathcal F}$ containing only possible $\tau_i=\{ i\}$, $i=2,3,4$,   $\tau_{ij}=\{i,j\}$ for some $1<i<j\leq 4$. We get $\sdepth I\geq \min (\{A_0\}\cup\{A_{\tau}\}_{\tau\in {\mathcal F}}\})$. As usual,
$A_0=\sdepth(I\cap S'')S$ if $I\cap S''\not=0$ or $A_0=n$ otherwise. We have
$$A_{\tau_i}\geq\sdepth_{S_{\tau_i}} ((\cap_{j=2, j\not =i}^{4}P_j)\cap S_{\tau_i})+\sdepth_{S'}(P_i\cap S'),$$
for $i=2,3,4$ and
$$A_{\tau_{ij}}\geq\sdepth_{S_{\tau_{ij}}} (P_k\cap S_{\tau_{ij}})+\sdepth_{S'}(P_i\cap P_j\cap S'),$$
where $1<i<j\leq 4$, $k=[4]\setminus \{1,i,j\}$. Here we set \\
 $S_{\tau_{ij}}=K[x_j:x_j\in S'',x_j\not \in P_i+P_j]$ and $S_{\tau_i}=K[x_j:x_j\in S'',x_j\not \in P_i]$.
 As in Lemma \ref{good} we have $A_0\geq \depth I$. It is enough to show that $A_{\tau_i}, A_{\tau_{ij}}\geq 3$ except in the case $P_1+P_3=P_1+P_4=P_2+P_3=P_2+P_4=m$ when it is enough to show that $A_{\tau_{34}}\geq 2$. Note that $A_{\tau_2}\geq  3$ because $\sdepth_{S'}(P_2\cap S')\geq 1+\lceil\frac{\height (P_2\cap S')}{2}\rceil$.

\vskip 0.3 cm
{\bf Part $A_{\tau_i}\geq 3$, $i>2$}

 We study for example $A_{\tau_4}$. Using  \cite[Lemma 4.3]{PQ} we have
$$A_{\tau_4}\geq
\sum_{j=2}^{3}\lceil\frac{\dim S''/((P_j+P_4)\cap S'')}{2}\rceil +1\geq 3,$$
if $(P_2+P_4)\cap S''$ and $(P_3+P_4)\cap S'' $ do not contain each other, where $\lceil a\rceil$, $a\in {\bf Q}$ denotes the smallest integer not less than $ a$. Otherwise, if $P_2\cap S''\subset P_3+P_4$ then  $P_2\cap S'\not \subset P_4$ since $P_2+P_3+P_4=m$ and $P_3+P_4\not =m$. Thus $P_4\cap S'$ is not the maximal ideal of $S'$ and so
$\sdepth_{S'}(P_4\cap S')\geq 1+\lceil\frac{\height (P_4\cap S')}{2}\rceil$ by \cite{Bi}. Then
$$A_{\tau_4}\geq
\sdepth_{S_{\tau_4}} (P_2\cap S_{\tau_4})+\sdepth_{S'}(P_4\cap S')\geq 2+\lceil\frac{\height (P_4\cap S')}{2}\rceil .$$
If $P_3\cap S''\subset P_2+P_4$ and  $P_3\cap S'\not \subset P_4$ we proceed as above. If $P_3\cap S' \subset P_4$ then we get $P_2+P_4 =m$ because $P_2+P_3+P_4=m$.
By ($*$) we get also $P_1+P_3=m$. It follows $P_3\cap S_{\tau_4}$ is not maximal in $S_{\tau_4}$ because $P_3+P_4\not =m$ and so
$$A_{\tau_4}\geq
\sdepth_{S_{\tau_4}} (P_3\cap S_{\tau_4})+\sdepth_{S'}(P_4\cap S')\geq 2+\lceil\frac{\height (P_3\cap S_{\tau_4})}{2}\rceil.$$
\vskip 0.3 cm
{\bf Part  $A_{\tau_{ij}}\geq 3$}

Next, by  \cite[Lemma 4.3]{PQ} $$A_{\tau_{34}}\geq \sdepth_{S_{\tau_{34}}} (P_2\cap S_{\tau_{34}})+\sdepth_{S'}(P_3\cap P_4\cap S')\geq$$
$$\lceil\frac{\height (P_2\cap S_{\tau_{34}})}{2}\rceil+\lceil\frac{\dim S'/(P_3\cap S')}{2}\rceil +\lceil\frac{\dim S'/(P_4\cap S')}{2}\rceil\geq 3$$
if $P_3\cap S'$ and $P_4\cap S' $  do not contain each other (note that $P_2+P_3+P_4=m$). Otherwise, if for example $P_3\cap S'\subset P_4$ we get $P_1+P_4=m$ because $P_1+P_3+P_4=m$, and so $P_2+P_3=m$ by ($*$). If $P_1+P_3\not =m$ then $P_3\cap S'$ is not the maximal ideal of $S'$. It follows that
$\sdepth_{S'}(P_3\cap S')\geq 1+ \lceil\frac{\height (P_3\cap S')}{2}\rceil$. Thus, $A_{34}\geq 3$.
On the other hand,
if $P_1+P_3=m$, then $P_2+P_4=m$ by ($*$) and so $A_{34}\geq 2= \depth_SI$ as we know already.
Similarly, if $\tau_{23}\in {\mathcal F}$ we get $A_{\tau_{23}}\geq 3$ if $P_2\cap S'\not \subset P_3\cap S'$,
otherwise we see that $P_2\cap S'$ is not the maximal ideal in $S'$ and so $$A_{\tau_{23}}\geq 2+\lceil\frac{\height (P_2\cap S')}{2}\rceil \geq 3.$$
\end{proof}

\begin{Theorem} \label{main} Let  $I=\cap_{i=1}^4 P_i$  be a reduced  intersection of  four monomial prime ideals of $S$. Then  Stanley's Conjecture holds for $I$.
\end{Theorem}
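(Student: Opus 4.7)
The plan is a straightforward case analysis on the big size $t(I)$, invoking the deep work already established in the preceding sections. Since essentially all the technical labour has been done in Corollary \ref{t1}, Theorem \ref{s}, and Proposition \ref{2,4}, what remains is a short bookkeeping argument that partitions the possibilities.

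First I would perform the standard reduction: by \cite[Lemma 3.6]{HVZ}, passing to the case $\Sigma_{i=1}^4 P_i = m$ is harmless, since adjoining a free variable to the support shifts both $\depth$ and $\sdepth$ by the same amount. After this reduction the big size $t(I)$ satisfies $t(I) < 4$, and we may assume none of the $P_i$ equals $m$ (otherwise $I$ would reduce to an intersection of at most three primes, already handled by \cite{AP}). So the cases to consider are $t(I) \in \{1,2,3\}$.

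Next I would dispatch each value of $t(I)$ in turn. If $t(I)=1$, Corollary \ref{t1} applies directly and gives $\sdepth_S I \geq \depth_S I$. If $t(I)=2$, then, possibly after relabelling so that $P_1+P_2\neq m$, Proposition \ref{2,4} provides the desired inequality. If $t(I)=3$, one observes that for every $i\in [4]$ the sum $\Sigma_{j\neq i}P_j \neq m$ (by definition of big size, no three among the $P_i$ sum to $m$). Consequently $P_i \not\subset \Sigma_{j\neq i} P_j$, since otherwise $\Sigma_{j\neq i}P_j = \Sigma_{j=1}^4 P_j = m$. Thus the hypothesis of Theorem \ref{s} is satisfied, and that theorem yields both the depth formula $\depth_S I = 4$ and the Stanley inequality.

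The main obstacle here is really not in this theorem itself but is already absorbed in Proposition \ref{2,4}, whose proof occupies the bulk of the preceding section. My only concern in writing out this argument is ensuring that the reduction to $\Sigma P_i = m$ really preserves the value of $t(I)$ and that the trivial degeneracies (such as some $P_i = m$, or $s$ effectively dropping below $4$ after removing redundancies) are cleanly handled; but in each of those boundary situations the intersection becomes one of fewer primes and is covered by \cite{AP} or by the $s\leq 3$ cases of Corollary \ref{t1} and Theorem \ref{s}. So the proof should amount to at most a paragraph of case distinctions plus these citations.
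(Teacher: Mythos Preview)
Your case analysis for $t(I)\le 2$ is fine and matches the paper exactly. The gap is in the case $t(I)=3$. You assert that ``no three among the $P_i$ sum to $m$'', but this misreads the definition of big size: $t(I)=3$ means that \emph{not all} triples sum to $m$, i.e.\ there \emph{exists} some $i$ with $\Sigma_{j\ne i}P_j\neq m$, not that this holds for every $i$. For instance one can have $P_1+P_2+P_3\neq m$ while $P_2+P_3+P_4=m$; then $P_1\subset m=\Sigma_{j\ne 1}P_j$, so the hypothesis of Theorem~\ref{s} fails and that theorem cannot be invoked.

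What the paper does instead for $t(I)=3$ is pick the single index (say $i=4$) with $P_4\not\subset P_1+P_2+P_3=(x_1,\dots,x_r)$, and run Theorem~\ref{sdepth} with this choice of $r$; then $\mathcal F=\{\{4\}\}$, and one bounds $A_{\{4\}}$ via the three--prime case \cite{AP} together with Lemma~\ref{ex}, while $A_0$ is handled by induction on $n$ (the ideal $I\cap S''$ lives in fewer variables). So the $t(I)=3$ case requires an additional inductive argument rather than a direct citation of Theorem~\ref{s}.
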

\begin{proof} By \cite[Lemma 3.6]{HVZ} it is enough to consider the case when $\sum_{j=1}^4 P_j=m$. If $t(I)\leq 2$ then the result follows by Corollary \ref{t1} and Proposition \ref{2,4}.
Otherwise, there exists $i\in [s]$ such that $P_i\not \subset \sum_{1=j\not =i}^4 P_j$, let us say  $P_4\not \subset \sum_{1=1}^3 P_j$.
Apply induction on $n$, the case $n\leq 5$ being done in \cite{P}. We assume that $\sum_{1=1}^3 P_j=(x_1,\ldots,x_r)$ for some $r<n$. Apply Theorem \ref{sdepth} as before with $\mathcal F$ containing just $\tau=\{4\}$. We have
$$A_{\tau}\geq \sdepth_{S_{\tau}} ((\cap_{j=1}^{3}P_j)\cap S_{\tau})+\sdepth_{S'}(P_4\cap S')\geq  \depth_{S_{\tau}} ((\cap_{j=1}^{3}P_j)\cap S_{\tau})+1$$
by \cite{AP} and so
$$A_{\tau}\geq \depth_{S_{\tau}} S_{\tau}/((\cap_{j=1}^{3}P_j)\cap S_{\tau})+2=2+\depth_{S} S/((\cap_{j=1}^{3}(P_j+P_4))=$$
$$1+\depth_{S} ((\cap_{j=1}^{3}(P_j+P_4))\geq \depth_SI$$
by Lemma \ref{ex}. Suppose $I\cap S''\not =0$. Then $A_0\geq n-r+\sdepth_{S''}(I\cap S'')$ by \cite[Lemma 3.6]{HVZ}. If $t(I\cap S'')\leq 2$ we get $\sdepth_{S''}(I\cap S'')\geq \depth_{S''}(I\cap S'')$ as above. Otherwise there exists $i\in [4]$ such that $(P_i\cap S'')\not \subset \sum_{1=j\not =i}^4 (P_j\cap S')$ and we get a similar estimate using the induction hypothesis (less variables). Thus $A_0\geq n-r+\depth_{S''}(I\cap S'')\geq\depth_SI$ by \cite[Proposition 1.2]{R}.
\end{proof}

\begin{Example}\label{hin}{\em Let $n=10$, $P_1=(x_1,\ldots,x_7)$, $P_2=(x_3,\ldots,x_8)$,

\noindent $P_3=(x_1,\ldots,x_4,x_8,\ldots,x_{10})$, $P_4=(x_1,x_2,x_5,x_8,x_9,x_{10})$,
$P_5=(x_5,\ldots,x_{10})$. We have $P_1+P_3=P_2+P_3=P_1+P_4=P_2+P_4=P_3+P_5=P_1+P_5=m$, $P_2+P_5=m\setminus\{x_1,x_2\}$, $P_3+P_4=m\setminus\{x_6,x_7\}$,
$P_4+P_5=m\setminus\{x_3,x_4\}$, $P_1+P_2=m\setminus\{x_9,x_{10}\}$. We have $t(I)=2$. Applying the proof of  Lemma \ref{bad} (see there the last exact sequence), we get $$\depth_SS/I=
\min\{\depth_SS/(P_1\cap P_2), \depth_SS/(P_2\cap \ldots\cap P_5)\}.$$
We have $\depth_SS/(P_1\cap P_2)=3$ and for $a:=\depth_SS/(P_2\cap \ldots\cap P_5)$ we apply (3) of Lemma \ref{good}, with $P_4+P_5\not =m$ and $P_2+P_3=m$.
As for $j=2$ we have $P_2+P_4=m\not =P_2+P_5$ and for $j=3$ we have $P_3+P_4\not =m=P_3+P_5$ it
follows that $a=1+\dim S/(P_4+P_5)=3$ and so $\depth_S I=4$.

Applying Theorem \ref{sdepth} to $P_1$ as main prime we see that $A^{(1)}_{3,4}\geq 3$, where $A^{(1)}_{3,4}$ denotes $A_{\tau}$ when $P_1$ is the main prime for $\tau=\{3,4\}$. Indeed, $$A^{(1)}_{3,4}\geq
\sdepth_{K[x_6,x_7]} (x_6,x_7)K[x_6,x_7]+\sdepth_{K[x_8,x_9,x_{10}]}(x_8,x_9,x_{10})K[x_8,x_9,x_{10}]=3.$$
Similarly choosing $P_2$ as a main prime we get $A^{(2)}_{3,4}\geq 3$
 (now the usual $r$-variables are the variables generating $P_2$, namely $x_3,\ldots,x_8$)
 and taking $P_3$,$P_4$    as  main primes we get $A^{(3)}_{2,5}\geq 3$, respectively $A^{(4)}_{2,5}\geq 3$.  Thus from these we cannot conclude that $\sdepth_S I\geq \depth_S I$. Fortunately, choosing $P_5$ as a main prime you can see that all $A_{\tau}\geq 4$, which is enough (notice that $\{2\}\not \in{\mathcal F}^{(5)}$). Note that $\dim S/P_5=4$ is maximum possible among $\dim S/P_i$, but we have also $\dim S/P_2=\dim S/P_4=4$.
}
\end{Example}
\begin{Remark}\label{r1}{\em
The above example shows that it is not clear how one can use the special Stanley decompositions from  \cite[Proposition 2.3]{AP}  (see here Theorem \ref{sdepth}) in general. It is not clear that we may
find always a "good" main prime $P_i$. If it really exists then it is not clear how we could  pick it, the maximum dimension of $S/P_i$ seems to be not enough.
 On the other hand, if we apply Theorem \ref{sdepth} for $r=8$, that is to the case $P_1+P_2=(x_1,\ldots,x_8)$,  then
$$A_5^{(12)}\geq \sdepth((x_3,x_4)\cap (x_1,x_2)\cap K[x_1,\ldots,x_4])+\sdepth ((x_9,x_{10})\cap K[x_9,x_{10}])=4,$$
$$\depth((x_3,x_4)\cap (x_1,x_2)\cap K[x_1,\ldots,x_4])+\depth ((x_9,x_{10})\cap K[x_9,,x_{10}])=3<\depth_SI.$$
Thus, we cannot hope to prove  the Stanley's Conjecture, in general, by induction on $n$, using these special Stanley decompositions.}
\end{Remark}


\begin{thebibliography}{99}

\bibitem{Bi} C.\ Biro, D.\ Howard, M.\ Keller, W.\ Trotter, S.\ Young, {\em Interval partition and Stanley
depth}, J. Combin. Theory, 117, (2010), 475-482.

\bibitem{BH} W.\ Bruns and J. Herzog, {\em Cohen-Macaulay rings}, Revised edition. Cambridge University Press (1998).

\bibitem{FH} G. Fl{\o}ystad, J.Herzog, {\em Gr\"obner basis of syzygies and Stanley depth,} to appear in J. Algebra,  arXiv:AC/1003.4495v1.
\bibitem{HVZ} J. Herzog, M. Vladoiu, X. Zheng, {\em How to compute the Stanley depth of a monomial ideal,}  J.  Algebra, 322 (2009), 3151-3169.
\bibitem{Is} M. Ishaq, {\em Upper bounds for the Stanley depth}, to appear in Comm. Algebra, arXiv:AC/1003.3471.
\bibitem{Is1} M. Ishaq, {\em Values and bounds of the Stanley depth}, to appear in Carpathian J. Math., arXiv:AC/1010.4692.

\bibitem{L}  G. Lyubeznik, On the arithmetic rank of monomial ideals. J. Algebra 112, 86–89 (1988).
\bibitem{AP} A. Popescu, {\em Special Stanley Decompositions}, Bull. Math. Soc. Sc. Math. Roumanie, 53(101), no 4 (2010), arXiv:AC/1008.3680.
\bibitem{P} D. Popescu,  {\em An inequality between depth and Stanley depth}, Bull. Math. Soc. Sc. Math. Roumanie 52(100), (2009), 377-382, arXiv:AC/0905.4597v2.
\bibitem{PQ} D. Popescu, I. Qureshi, {\em Computing the Stanley depth}, J. Algebra, 323 (2010), 2943-2959.
\bibitem{R} A. Rauf, {\em Depth and Stanley depth of multigraded modules}, Comm.  Algebra, 38 (2010),773-784.
\bibitem{S} R.P. Stanley, {\em Linear Diophantine equations and local cohomology,} Invent. Math. 68 (1982) 175-193.
\bibitem{Vi} R.\ H.\ Villarreal, {\em Monomial Algebras}, Marcel Dekker Inc.,\ New York,\ 2001.
\end{thebibliography}
\end{document}